\newtheorem{theorem}{Theorem}
\newtheorem{remark}[theorem]{Remark}
\newtheorem{proposition}[theorem]{Proposition}
\newtheorem{corollary}[theorem]{Corollary}
\newtheorem{lemma}[theorem]{Lemma}
\DeclareMathOperator{\var}{\mathrm{var}}
\DeclareMathOperator{\rE}{\mathbb{E}}
\DeclareMathOperator{\rP}{\mathbb{P}}
\newcommand{\II}{\mathbf{I}}
\newcommand{\re}{\mathrm{e}}
\newcommand{\rd}{\mathrm{d}}
\newcommand{\ri}{\mathrm{i}}
\global\long\def\BB{\mathcal{B}}
\global\long\def\ZZ{\mathbb{Z}^{2}}
\global\long\def\CC{\mathcal{C}}
\global\long\def\NN{\mathbb{N}}
\global\long\def\cS{\mathsf{S}}
\global\long\def\PP{\mathbb{P}}
\begin{document}
\title[Relative Complexity of RWRS without local times]{Relative Complexity of Random Walks in Random Scenery in the absence of a weak invariance principle for the local times}
\author{George Deligiannidis}
\address{Department of Statistics, University of Oxford, Oxford OX1 3TG, UK}
\email{deligian@stats.ox.ac.uk}
\author{Zemer Kosloff}\thanks{The research of Z.K. was supported in part by the European Advanced
Grant StochExtHomog (ERC AdG 320977).}
\address{Mathematics Institute, University of Warwick, Coventry CV4 7AL, UK}
\email{z.kosloff@warwick.ac.uk}
\date{}                                           

\maketitle

\begin{abstract}
We answer the question of Aaronson about the relative complexity of Random Walks in Random Sceneries driven by either aperiodic two dimensional random walks, two-dimensional Simple Random walk, or by aperiodic random walks in the domain of attraction of the Cauchy distribution. A key step is proving that the range of the random walk satisfies the F\"olner property almost surely.
\end{abstract}
\section{Introduction}
The notion of entropy was first introduced into ergodic theory by Kolmogorov as an isomorphism invariant. That is if two measure preserving systems are (measure theoretically) isomorphic then their entropy is the same. It was later shown by a seminal theorem of Ornstein \cite{Ornstein} that entropy is a complete invariant for Bernoulli automorphisms (transformations which are isomorphic to a shift on an i.i.d. sequence, aka Bernoulli Shifts) meaning that two Bernoulli automorphisms are isomorphic if and only if their entropies coincide. Measure theoretic complexity, which is roughly the rate of growth of information, was introduced by Ferenczi~\cite{Ferenczi}, Katok and Thouvenot~\cite{Katok Thouvenot} and others as an isomorphism invariant for the problem of understanding whether two zero entropy systems are isomorphic. 

In a recent work, Aaronson \cite{Aaronson} introduced a relativised notion of complexity and calculated the relative complexity of random walks in random sceneries where the jump random variable is $\mathbb{Z}$ valued, centred, aperiodic and in the domain of attraction of an $\alpha$ stable distribution with $1<\alpha\leq 2$. The main tool used there is Borodin's weak invariance principle for the local times \cite{Borodin,Borodin 2}. Random Walks in Random Scenery(RWRS) are natural models for the study of this relative complexity notion as they are examples of non-Bernoulli $K$-automorphisms \cite{Kalikow} and so the relative complexity could be a good way to try to distinguish whether two such systems are isomorphic. Indeed, Aaronson's ideas of using the weak convergence of local times to count Hamming balls were later used by Austin \cite{Austin}, in the definition of a full isomorphism invariant, called the scenery entropy, for the class of random walk in random sceneries with jump distribution of finite variance. 

For the purpose of the introduction, we will now describe the classical random walk in random scenery from probability theory. Let $X_1,X_2,\dots$ be i.i.d. $\mathbb{Z}^d$-valued random variables,  the jump process, and $S_n:=\sum_{k=1}^{n}X_k$ the corresponding random walk. The scenery is an independent (of $\left\{ X_i \right\}$) field of i.i.d random variables $\left\{ C_j\right\}_{j\in\mathbb{Z}^d}$. The joint process $\left(X_n,C_{S_n}\right)$ is then known as a random walk in random scenery. The relative complexity of Aaronson in that case is heuristically as follows: Assuming that we have full information of the sequence $\mathbf{X}=X_1,X_2,...$, what is the rate of growth of the information arising from the scenery for most of the realizations of $X$?

If $X_1,X_2,...$ are Bernoulli $\pm1$ fair coin tossing (in other words the driving random walk is the simple random walk on the integers) then by the local central limit theorem, at time $n$, the range of the random walk $R(n):=\left\{ S_j:1\leq j\leq n\right\}$ is of order constant times $\sqrt{n}$. The range of the random walk is related to this problem since $\big\{C_{S_j}:j\in [1,n]\big\}=\big\{C_{k}:k\in R(n)\big\}$. It then appears that the rate of growth of information arising from the scenery should be of the order $\exp(H(C)\cdot \#R(n))$ with $H(C)$ the Shannon entropy of $C$. Thus for this example one would expect (which is verified in \cite{Aaronson}) that the relative complexity is of the order $\exp(c_w\sqrt{n}H(C))$, where $c_w$ is a constant depending on $w$ and the order should be interpreted as existence of a non trivial distributional limit. 

In this paper we treat random walks in random sceneries driven by aperiodic, recurrent,  random walks with finite second moments in $\mathbb{Z}^2$, by Simple Random Walk in $\mathbb{Z}^2$, or by an  aperiodic, recurrent, random walk in $\mathbb{Z}$ in the domain of attraction of the Cauchy distribution. Since the limiting distributions don't have local times, Aaronson's and Austin's methods do not apply. For these types of RWRS's, Kesten and Spitzer \cite{Kesten Spitzer} conjectured that there exists constants $a_n\to\infty$ such that
$\frac{1}{a_n}\sum_{k=1}^{nt}Z_{S_n}$ converges weakly to a Brownian motion (when $\var(C)<\infty$). This was shown to be true by Bolthausen \cite{Bolthausen} when $S_n$ is the two dimensional simple random walk on $\mathbb{Z}^2$ and by the first author and Utev \cite{DU15} for the case of the Cauchy distribution. Bolthausen's argument was generalized by \v{C}erny in \cite{cerny07} and the ideas there were a major inspiration for us. The idea is since one cannot have a weak invariance principle for the local time, one can study the asymptotics of self-intersection local times, see Section \ref{sec: range}, in order to prove a statement of the form "for most of the points of $R(n)$ the number of times the random walk visits them, up to time $n$, is greater than a constant times $\log(n)$" (see Theorem \ref{thm:distribution} for a precise statement). We refine this method to prove a result of independent interest, namely that the range of the random walk is almost surely a {F}\"olner sequence (Theorem \ref{thm:folner}). With these two Theorems at hand we can proceed by a simplified argument to deduce the main result, Theorem \ref{thm: relative complexity of RWRS}. which answers Aaronson's question about the relative complexity of this type of RWRS's. We think that this simpler (and softer) method can be used to calculate the relative complexity of other RWRS's such as \cite{Rudolph,Ball}.

This paper is organized as follows. In Section \ref{sec: Prelimaneries} we first start with the relevant definitions we need and then end with the statement of the main result. In Section \ref{sec: range} we prove the results we need for the random walk and it's range. Finally, Section \ref{sec: proof of main theorem} is the proof of the main Theorem. For the sake of completeness we include an Appendix with a proof of some standard facts about the random walks we consider.

\section{Preliminaries}\label{sec: Prelimaneries}

\subsection{Relative complexity over a factor}
Let $\left(X,\BB,m\right)$ be a standard probability space and $T:X\to X$
a $m$- preserving transformation. Denote by $\mathfrak{B}(X)$ the
collection of all measurable countable partitions of $X$. In order
to avoid confusion with notions from probability, we will denote the
partitions by Greek letters $\beta\in\mathfrak{B}(X)$ and the atoms
of $\beta$ by $\beta^{1},\beta^{2},...,\beta^{\#\beta},\ \#\beta\in\mathbb{N\cup\left\{ \infty\right\} }$.
A partition $\beta$ is a \textit{generating partition} if the smallest $\sigma$-algebra
containing $\left\{ T^{-n}\beta:\ n\in\mathbb{Z}\right\} $ is $\BB$.  For $\beta\in\mathfrak{B}(X)$ and $n\in\mathbb{N}$ let 
$$\beta_0 ^n:=\vee_{j=0}^n T^{-j}\beta=\left\{ \cap_{j=0} ^{n}T^{-j} b_j: b_1,..,b_n\in\beta \right\}.$$
The $\beta$-\textit{name} of a point $x\in X$ is the sequence $\beta(x)\in\left(\#\beta\right)^{\NN}$
defined by 
\[
\beta_{n}(x)=i\ \text{if and only if}\ T^{n}x\in\beta^{i}.
\]
The $\left(T,\beta,n\right)$ \textit{Hamming pseudo-metric} on $X$ is defined
by 
\[
\bar{d}_{n}^{\left(\beta\right)}(x,y)=\frac{1}{n}\#\left\{ k\in\left\{ 0,...,n-1\right\} :\ \beta_{k}(x)\neq\beta_{k}(y)\right\} .
\]
That is two points $x,y\in X$ are $\bar{d}_{n}^{\left(\beta\right)}$
close if for most of the $k's$ in $\{0,..,n-1\}$, $T^{k}x$ and
$T^{k}y$ lie in the same partition element of $\beta$. An $\epsilon$-ball in the  Hamming pseudo-metric will be denoted by 
\[
B\left(n,\beta,x,\epsilon\right):=\left\{ y\in X:\ \bar{d}_{n}^{\left(\beta\right)}(x,y)<\epsilon\right\} .
\]

This pseudo-metric was used in \cite{Katok Thouvenot,Ferenczi} to
define complexity sequences and slow-entropy-type invariants. It was
shown for example by Katok and Thouvenot \cite{Katok Thouvenot} that if the growth rate
of the complexity sequence is of order $e^{hn}$ with $h>0$, then
$h$ equals the entropy of $X$, by Ferenczi  \cite{Ferenczi} that $T$ is isomorphic
to a translation of a compact group if and only if the complexity
is of lesser order from any sequence which grows to infinity and
more. In this paper we will be interested with the relativised versions
of these invariants which were introduced in Aaronson \cite{Aaronson}. 

A $T$-invariant sub-$\sigma$-algebra $\CC\subset\BB$ is called
a {\it factor}. An equivalent definition in ergodic theory is a probability
preserving transformation $\big(Y,\tilde{\CC},\nu,S\big)$ with
a (measurable) factor map $\pi:X\to Y$ such that $\pi T=S\pi$ and
$\nu=m\circ\pi^{-1}$, in this case $\CC=\pi^{-1}\tilde{C}$. 

Given a factor $\CC\subset\BB$, $n\in\NN$, $\beta\in\mathfrak{B}(X)$ and $\epsilon>0$ we define
a $\CC$-measurable random variable $\mathcal{K}_{\CC}\left(\beta,n,\epsilon\right):X\to\NN$
by 
\[
\mathcal{K}_{\CC}\left(\beta,n,\epsilon\right)(x)
:=\min\Bigg\{ \#F:\ F\subset X,\ m\left(\left.\bigcup_{z\in F}B(n,\beta,z,\epsilon)\right| \CC\right)(x)>1-\epsilon\Bigg\} ,
\]
where $m\left(\left.\cdot\right|\CC\right)$ denotes the conditional
measure of $m$ with respect to $\CC$. The sequence of random variables
$\left\{ \mathcal{K}_{\CC}\left(\beta,n,\epsilon\right)\right\} _{n=1}^{\infty}$
is called the \textit{relative complexity} of $\left(T,\beta\right)$ with
respect to  $\beta$ given $\CC$. 

The {\it{upper entropy dimension of $T$ given $\CC$}} is defined by
\[
\overline{{\rm EDim}}\left(T,\CC\right):=\inf\left\{ t>0:\ \limsup_{n\to\infty}\frac{\log\mathcal{K}_{\CC}\left(\beta,n,\epsilon\right)}{n^{t}}=0,\ \forall\beta\in\mathfrak{B}(X)\right\} 
\]
and the {\it{lower entropy dimension of $T$ given $\CC$}} is
\[
\underline{{\rm Edim}}\left(T,\CC\right)=\sup\left\{ t>0:\ \exists\beta\in\mathfrak{B}(X),\ \liminf_{n\to\infty}\frac{\log\mathcal{K}_{\CC}\left(\beta,n,\epsilon\right)}{n^{t}}=\infty\right\} .
\]
In case $\underline{{\rm Edim}}\left(T,\CC\right)=\overline{{\rm Edim}}\left(T,\CC\right)=a$ we write ${\rm Edim}\left(T,\CC\right)=a$ and call this quantity the {\it{entropy dimension of $T$ given $\CC$}}. 

Given a sequence of random variables $Y_n,n\in\mathbb{N}$ taking values on $[0,\infty]$ we write $Y_n\xrightarrow[n\to\infty]{\mathfrak{D}}Y$ to denote ``$Y_n$ converges to $Y$ in distribution" and $Y_n\xrightarrow[n\to\infty]{m} Y$ to denote convergence in probability.
The next Theorem is a special case of \cite[Thm 2]{Aaronson} when $\left\{ n_k\right\}_{k=1}^{\infty}=\mathbb{N}$. 

\begin{theorem}[Aaronson's Generator Theorem]
\label{thm: Aaronson's RC} 
Let $\left( X,\mathcal{B},m,T\right)$ be a measure preserving transformation and a sequence $d_n>0$.
\begin{itemize}
\item[(a)] If there is a countable $T$-generator $\beta\in\mathfrak{B}(X)$ and a random variable $Y$ on $[0,\infty]$ satisfying
$$\frac{\log\mathcal{K}_{\CC}\left(\beta,n,\epsilon\right)}{d_n}\xrightarrow[n\to\infty,\ \epsilon\to 0]{\mathfrak{D}} Y$$
Then for all $T$-generating partitions $\alpha\in\mathfrak{B}(X)$, 
$$\frac{\log\mathcal{K}_{\CC}\left(\alpha,n,\epsilon\right)}{d_n}\xrightarrow[n\to\infty,\ \epsilon\to 0]{\mathfrak{D}} Y$$
\item[(b)] if for some $\beta\in \mathfrak{B}(X)$, a generating partition for $T$,
\[
\frac{\log\mathcal{K}_{\CC}\left(\beta,n,\epsilon\right)}{n^{t}}\xrightarrow[n\to\infty,\epsilon\to0]{m}0,
\]
then $\overline{{\rm EDim}}\left(T,\CC\right)\leq t$.

\item[(c)] if for some partition $\beta\in \mathfrak{B}(X)$, 
\[
\frac{\log\mathcal{K}_{\CC}\left(\beta,n,\epsilon\right)}{n^{t}}\xrightarrow[n\to\infty,\epsilon\to0]{m}\infty
\]
then $\underline{{\rm Edim}}\left(T,\CC\right)\geq t$.

\end{itemize}
\end{theorem}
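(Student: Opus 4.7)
The plan is to prove part (a), the invariance of the distributional limit under the choice of generator, and then derive (b) and (c) from it. Part (c) is immediate from the definition of $\underline{\mathrm{Edim}}(T,\CC)$: the hypothesised partition $\beta$ itself witnesses that $t$ lies in the set whose supremum defines that quantity. For part (b), given any countable partition $\beta'\in\mathfrak{B}(X)$ (not necessarily generating), the refinement $\gamma:=\beta\vee\beta'$ is still generating, and since refinements only enlarge the Hamming metric one has $\mathcal{K}_{\CC}(\beta',n,\epsilon)\le \mathcal{K}_{\CC}(\gamma,n,\epsilon)$. Once (a) is in hand, it transfers the convergence in probability from $\beta$ to $\gamma$, and hence to every $\beta'$, yielding $\overline{\mathrm{EDim}}(T,\CC)\le t$.

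For part (a), let $\alpha$ and $\beta$ be two countable generating partitions. I would assemble three standard ingredients. First, a \emph{Hamming continuity} lemma: if $\tilde\alpha$ is any partition with $\sum_i m(\alpha^i\triangle\tilde\alpha^i)<\eta$, then applying Birkhoff's ergodic theorem to the indicator of $\{x:\alpha(x)=\tilde\alpha(x)\}$ shows that, on a set of $m$-measure exceeding $1-\sqrt\eta$, one has $\bar d_n^{(\alpha)}(x,y)\le\bar d_n^{(\tilde\alpha)}(x,y)+2\sqrt\eta$ for all $y$ and all sufficiently large $n$, so Hamming $\epsilon$-balls for $\tilde\alpha$ sit inside $(\epsilon+3\sqrt\eta)$-balls for $\alpha$. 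Second, a \emph{generator approximation}: since $\beta$ generates, for every $\eta$ there exist $N=N(\eta,\alpha)$ and $\tilde\alpha$ measurable with respect to $\beta_{-N}^N$ with $\sum_i m(\alpha^i\triangle\tilde\alpha^i)<\eta$. Third, a \emph{timescale equivalence}: if $\tilde\alpha$ is a function of $\beta_{-N}^N$, then $\bar d_n^{(\tilde\alpha)}\le\bar d_n^{(\beta_{-N}^N)}$, and the $\beta_{-N}^N$-name on $[0,n-1]$ is determined by the $\beta$-name on $[-N,n+N-1]$, so that $\mathcal{K}_\CC(\beta_{-N}^N,n,\epsilon)\le \mathcal{K}_\CC(\beta,n+2N,\epsilon+O(N/n))$. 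Chaining these inequalities yields $\mathcal{K}_\CC(\alpha,n,\epsilon+3\sqrt\eta)\le \mathcal{K}_\CC(\beta,n+2N,\epsilon+O(N/n))$ on a set of $m$-measure at least $1-\sqrt\eta$; dividing by $d_n$, taking logarithms, and sending $n\to\infty$, $\epsilon\to 0$, and $\eta\to 0$ in that order transfers the distributional limit $Y$ from $\beta$ to $\alpha$, and symmetry closes the loop.

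The principal obstacle is that the covers in the definition of $\mathcal{K}_\CC$ are required to have \emph{conditional} mass exceeding $1-\epsilon$ on $\CC$-fibres rather than average mass under $m$. All of the estimates in the three steps above must therefore be verified fibrewise on $\CC$-atoms: this is possible because the underlying $m$-a.s. Birkhoff convergence descends to $m$-a.e. $\CC$-fibre by Fubini, and the approximating covers can be chosen $\CC$-measurably by a standard measurable selection argument. A second delicate point is the coupled limit $n\to\infty,\,\epsilon\to 0$: the approximation parameter $\eta$ must vanish slowly enough that the Birkhoff error $\sqrt\eta$ is absorbed into $\epsilon$ and the window correction $N(\eta)/n$ becomes $o(1)$, yet fast enough for $\eta$ itself to tend to $0$. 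This coupling is precisely why the theorem is stated with the joint limit rather than an iterated one.
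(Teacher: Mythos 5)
This theorem is not proved in the paper at all: it is quoted as a special case of \cite[Thm 2]{Aaronson} (with $\{n_k\}=\NN$), so there is no in-paper proof to compare against. Your proposal is, in outline, the standard generator-transfer argument that Aaronson's proof itself follows (in the tradition of Katok--Thouvenot and Ferenczi): approximate one generator by a partition measurable with respect to a finite window of the other, control the change in the Hamming pseudo-metric via Birkhoff's theorem, and absorb the window shift into the time index and the radius. The reduction of (b) to (a) by refining an arbitrary partition against the given generator is also the right move, since $\mathcal{K}_\CC$ is monotone under refinement and convergence in distribution to the constant $0$ is convergence in probability.

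Two places where your sketch is thinner than it should be. First, in the timescale step the radius correction is not additive: a single $\beta$-discrepancy at time $j$ can produce up to $2N+1$ discrepancies in the $\beta_{-N}^{N}$-name, so the correct inequality is of the form $\mathcal{K}_\CC(\beta_{-N}^N,n,\epsilon)\leq\mathcal{K}_\CC\bigl(\beta,n+2N,\epsilon/(2N+1)\bigr)$ rather than $\epsilon+O(N/n)$; this is harmless only because $N$ is fixed before the joint limit $n\to\infty,\ \epsilon\to0$ is taken, and you should say so explicitly. Second, your claim that (c) is ``immediate from the definition'' elides the mismatch between the hypothesis (convergence in probability, in the joint limit, to $\infty$) and the condition $\liminf_{n}\log\mathcal{K}_\CC(\beta,n,\epsilon)/n^{s}=\infty$ appearing in the definition of $\underline{\mathrm{Edim}}$, which is a pointwise statement about a $\CC$-measurable random sequence; one needs either the monotonicity of $\mathcal{K}_\CC$ in $\epsilon$ together with a passage from convergence in probability to a.s.\ convergence along the relevant exponents $s<t$, or one must read the definition in the distributional sense that Aaronson actually uses. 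Neither point invalidates the architecture, but both must be addressed for the argument to close.
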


\subsection{Basic ergodic theory for $\mathbb{Z}^d$ actions}
Let $\left(X,\BB,m\right)$ be a standard probability space and $G$ be an Abelian countable group. A measure preserving action of $G$ on $\left(X,\BB,m\right)$ is a map $\cS: G\to \mathrm{Aut} \left(X,\BB,m\right)$ such that for every $g_1,g_2\in G$, $\cS_{g_1 g_2}=\cS_{g_1}\cS_{g_2}$ and for all $g\in G$, $\left(\cS_{g}\right)_{*}m=m$. The action is \textit{ergodic} if there are no non trivial $\cS$-invariant sets.

Given an ergodic $G$ action $\left(X,\BB,m,\cS \right)$ and increasing sequence $F_n$ of subsets of $G$ one can define a sequence of averaging operators $A_n: L_2 (X,\BB,m)\circlearrowleft$ by
\[
A_n(f):=\frac{1}{\# F_n} \sum_{g\in F_n}f\circ \cS_g
\]
and ask whether for all $f\in L_2 (X,\BB,m)$ one has  $A_n(f)\to \int_X f dm$ in $L_2$. The sequences of sets $\left\{F_n\right\}_{n=1}^\infty$ for which this is necessarily true are called {\it F\"olner sequences} and they are characterised by the property that for every $g\in G$,
\[
\frac{\# \left[ F_n\triangle \left\{F_n+g\right\} \right]}{\# F_n }\xrightarrow[n\to\infty]{} 0.
\]
 
 In this work we will be concerned with either actions of $G=\mathbb{Z}$ which is generated by one measure preserving transformation or $G=\mathbb{Z}^2$ which corresponds to two commuting  measure preserving transformations. 
For a finite partition $\beta$ of $X$, one defines the \textit{entropy} of $\cS$ with respect to $\beta$ by
\[
h(\cS,\beta):=\lim_{n\to_\infty} \frac{1}{n^d}H\left( \bigvee_{j\in [0,n]^d \cap \mathbb{Z}^d} \cS_{j}^{-1}\beta \right),
\]
where $H(\beta)=\sum_{j=1}^{\#\beta} m\left( \beta^i \right)\log m\left( \beta^i\right)$ is the \textit{Shannon entropy} of the partition. The entropy of $\cS$ is then defined by
\[
h(\cS)=\sup_{\beta\in \mathfrak{B}(X):\ \beta\ \text{finite}} h(\cS,\beta)
\]
As in the case of a $\mathbb{Z}$ action, one says that $\beta$ is a generating partition if the smallest sigma algebra 
containing $\vee_{j\in\mathbb{Z}^d} \cS_{j}^{-1}\beta$ is $\mathcal{B}$. In an analogous way to the case of $\mathbb{Z}$ actions, it follows that if $\beta$ is a generating partition for $\cS$ then $h(\cS)=h(\cS,\beta)$ and if $h(\cS)<\infty$ then there exists finite generating partitions \cite{Krieger, Katsnelson Weiss, Danilenko Park}.

\section{Random walks in random sceneries and statement of main theorem}

In what follows we will be interested in a random walk in random scenery
where the jump random variable $\xi\in \mathbb{Z}^2$ is in the domain of attraction of 
$2$-dimensional Brownian Motion or $\xi \in \mathbb{Z}$ is strongly aperiodic and in the domain of attraction of the Cauchy law. The reason that these two
models are of most interest to us is that the limiting distribution
does not have a local time process.

To be more precise 
let $\xi, \xi_1, \xi_2, \dots$ be i.i.d. $\mathbb{Z}^d$-valued random variables defined on a probability space $(\Omega, \mathcal{F}, \mathbb{P})$, with characteristic function 
$\phi_\xi(t):=\mathbb{E}\big( e^{it\cdot\xi}\big)$ for $t\in [-\pi,\pi]^d$, 
and that either
\begin{description}
\item[A1] (\textit{$1$-stable}) $\xi \in \mathbb{Z}$ and $\phi_\xi(t)= 1- \gamma |t| + o(|t|)$ for $t\in [-\pi,\pi]$, for some $\gamma>0$; or
\item[A2] $\xi$ is in $\mathbb{Z}^2$ and $\rE |\xi|^2<\infty$ with non-singular covariance matrix $\Sigma$; equivalently
$\phi_\xi(t) = 1- \langle t, \Sigma t\rangle + o(|t|^2)$ for $t\in [-\pi,\pi]^2$.
\end{description} 

In the above cases the random walk $S_{n}\left(\xi\right):=\xi_{1}+\xi_{2}+\cdots+\xi_{n}$ we will also assume that the random walk
is \textit{strongly aperiodic} in the sense that there is no proper subgroup $L$ of $\mathbb{Z}^d$ such that $\mathbb{P}(\xi -x \in L)=1$ for some $x\in \mathbb{Z}^d$ . 

We are also interested in the two dimensional \textit{Simple Random Walk}, which has period 2 and is thus not covered by {\bf A2} above.
\begin{description}
\item[A2'] $\xi \in \mathbb{Z}^2$ and $\rP[\xi =e] = 1/4$ for $|e|=1$. Then $\sqrt{\det(\Sigma)}=1/2$.
\end{description}

Denote by $\mu_{\xi}$ the distribution of $\xi$. The base of the
RWRS is then defined as $\Omega=\big(\mathbb{Z}^{d}\big)^{\NN}$
the space of all $\mathbb{Z}^{d}$-valued sequences, $\mathbb{P}=\prod_{k=1}^{\infty}\mu_{\xi}$, 
the product measure, 
and $\sigma:\Omega\to\Omega$ the left shift on $\Omega$ defined
by 
\[
\left(\sigma w\right)_{n}=w_{n+1}.
\]
 When $d=2$, the random scenery is an ergodic probability preserving $\ZZ$-
action $\left(Y,\CC,\nu,\cS\right)$ and when $d=1$ it is just an ergodic
probability preserving transformation $\cS:\left(Y,\CC,\nu\right)\to\left(Y,\CC,\nu\right)$. 

The skew product transformation on $Z=\Omega\times Y$, $\BB_{Z}=\BB_{\Omega}\otimes\BB_{Y}$,
$m=\mathbb{P}\times\nu$, defined by 
\[
T(w,y)=\left(\sigma w,\cS_{w_{1}}(y)\right),
\]
is the \textit{random walk in random scenery} with scenery $\left(Y,\CC,\nu,\cS\right)$
and \textit{jump random variable} $\xi$. 
\begin{theorem}
\label{thm: relative complexity of RWRS}
Let $\left(Z,\BB_{Z},m,T\right)$
be RWRS with random scenery $\left(Y,\CC,\nu,\cS\right)$ and
 jump random variable $\xi$.
\begin{itemize}
\item[(a)] If $d=1$ and $\xi$ satisfies \textbf{A1} then for any generating
partition $\mathbf{\beta}$ for $T$, 
\[
\frac{\log(n)}{\pi \gamma n} \mathcal{K}_{\mathcal{B}_{\Omega}}\left(\beta,n,\epsilon\right)\xrightarrow{m}h\left(\cS\right).
\]
\item[(b)] If $d=2$ and $\xi$ satisfies \textbf{A2} or \textbf{A2'} then for any generating
partition $\mathbf{\beta}$ for $T$, 
\[
\frac{\log n}{2\pi \sqrt{\det (\Sigma)}n}\mathcal{K}_{\mathcal{B}_{\Omega}}\left(\beta,n,\epsilon\right)\xrightarrow{m}h\left(\cS\right).
\]
\end{itemize}
 In particular in both cases 
\[
{\rm Edim}\left(T,\BB_{\Omega}\right)=1.
\]

\end{theorem}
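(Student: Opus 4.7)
The plan is to apply Aaronson's generator theorem (Theorem \ref{thm: Aaronson's RC}(a)) to reduce the claim to a single convenient generating partition, and then to establish matching upper and lower bounds
$$\log\mathcal{K}_{\mathcal{B}_\Omega}(\beta,n,\epsilon)=h(\cS)\cdot\#R(n)\cdot(1+o_{\epsilon}(1)).$$
The local limit theorem gives $\#R(n)\sim\pi\gamma\,n/\log n$ in case (a) and $\#R(n)\sim 2\pi\sqrt{\det\Sigma}\,n/\log n$ in case (b), so the normalizing constants in the statement equal $1/\#R(n)$ up to $o(1)$, and convergence in probability to $h(\cS)$ follows. I assume first that $h(\cS)<\infty$; the infinite case is recovered by exhausting $\cS$ through finite-entropy factors and invoking Theorem \ref{thm: Aaronson's RC}(c).

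With $h(\cS)<\infty$, Krieger/Katznelson--Weiss provides a finite $\cS$-generating partition $\alpha$. I take $\beta$ to be the join of the partition of $\Omega$ by the coordinate $w_1$ with the lifted partition $\Omega\times\alpha$; this is a countable $T$-generator. Conditioning on $\mathcal{B}_\Omega$ fixes the path $(S_k(w))_{k\ge 0}$, so the Hamming pseudo-metric on a fibre becomes
$$\bar d_n^{(\beta)}\big((w,y),(w,y')\big)=\frac{1}{n}\#\bigl\{0\le k<n:\alpha(\cS_{S_k}y)\neq\alpha(\cS_{S_k}y')\bigr\}.$$
In particular the $\beta$-name modulo $w$ is determined by the space-indexed pattern $(\alpha(\cS_s y))_{s\in R(n)}$, with each position weighted by the local time at $s$.

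For the upper bound, Theorem \ref{thm:distribution} guarantees that outside a time-set of relative density at most $\epsilon$, every visited $S_k$ lies in a subset $R^{\ast}(n)\subset R(n)$ on which the local time is at least $c\log n$; hence two fibres whose space-patterns agree on $R^{\ast}(n)$ are within orbit-Hamming distance $\epsilon$, so it suffices to cover the space of such patterns. Since $R(n)$ is almost surely Følner (Theorem \ref{thm:folner}), Shannon--McMillan--Breiman for the $\mathbb{Z}^d$-action $\cS$ shows that on a set of $\nu$-measure $>1-\epsilon$ the realised pattern on $R(n)$ has probability at least $e^{-(h(\cS)+\delta)\#R(n)}$, leaving at most $e^{(h(\cS)+\delta)\#R(n)}$ such patterns and yielding the upper bound. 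For the lower bound, the same multiplicity argument shows that every Hamming ball $B(n,\beta,z,\epsilon)$ is contained in the preimage of a space-Hamming ball on $R(n)$ of radius $\epsilon'(\epsilon)\to 0$; a Stirling estimate bounds its combinatorial volume by $e^{H(\epsilon')\#R(n)}$ with $H(\epsilon')\to 0$, while Shannon--McMillan--Breiman bounds the maximal pattern probability by $e^{-(h(\cS)-\delta)\#R(n)}$. Each ball therefore has conditional $\nu$-measure at most $e^{-(h(\cS)-\delta-H(\epsilon'))\#R(n)}$, forcing the required cover size to be at least $(1-\epsilon)e^{(h(\cS)-\delta-H(\epsilon'))\#R(n)}$. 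Letting $\epsilon,\delta\to 0$ matches the upper bound.

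The main obstacle is applying Shannon--McMillan--Breiman along the \emph{random} Følner sequence $R(n)$, since the classical statement asks for a deterministic (ideally tempered) sequence. A natural route is to enclose $R(n)$ inside a deterministic box $B_n$ that grows suitably fast, apply the standard theorem on $B_n$, and use the Følner property from Theorem \ref{thm:folner} to show that the per-site entropy over $R(n)\subset B_n$ agrees with that over $B_n$ up to an $o(\#R(n))$ boundary term. A secondary technicality is the quantitative calibration $\epsilon\mapsto\epsilon'(\epsilon)$, which rests precisely on the logarithmic typical multiplicity furnished by Theorem \ref{thm:distribution} and must be tracked uniformly in $n$.
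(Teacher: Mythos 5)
Your overall architecture coincides with the paper's: reduce via Aaronson's Generator Theorem to the product generator $\Upsilon=\alpha\times\beta$ (coordinate partition times a finite $\cS$-generator), observe that conditionally on $\mathcal{B}_\Omega$ an $\Upsilon$-name is a scenery pattern on $R_n(w)$ read with multiplicities given by the local times, prove the upper bound by a Shannon--McMillan--Breiman count of typical patterns on the a.s.\ F\"olner range, and prove the lower bound by showing each time-Hamming ball of radius $\hat{\mathsf c}\delta$ sits inside a space-Hamming ball on $R(n)$ of relative radius $O(\delta)$ (via Theorem \ref{thm:distribution}) whose cardinality is controlled by Stirling. That matches the paper's decomposition $\mathcal{K}\le\Phi_{n,\epsilon}$ and $\mathcal{K}\ge\Phi_{n,\epsilon}/\mathcal{Q}_{n,\epsilon}$, and your normalisation via $\#R(n)\sim \pi\gamma n/\log n$ (resp.\ $2\pi\sqrt{\det\Sigma}\,n/\log n$) is exactly the paper's $\mathtt b_d(n)$.

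The one place where your argument as written would fail is the resolution you propose for your self-identified ``main obstacle.'' Enclosing $R(n)$ in a deterministic box $B_n$ and transferring SMB from $B_n$ to $R(n)$ cannot work: any deterministic box containing $R(n)$ with high probability has $\#B_n\gtrsim n$ (side $\sqrt n$ in $d=2$, and even larger spread in the Cauchy case), while $\#R(n)\sim cn/\log n=o(\#B_n)$, so the discrepancy between the information over $B_n$ and over $R(n)$ is of order $h(\cS)\#B_n$, not an $o(\#R(n))$ boundary term. No temperedness or box comparison is needed: Kieffer's generalized Shannon--McMillan theorem for amenable group actions \cite{Kieffer} applies to an \emph{arbitrary} F\"olner sequence and yields convergence in $\nu$-probability of $-\frac{1}{\#R_n(w)}\log\nu(\beta_{R_n(w)}(y))$ to $h(\cS)$. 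Since Theorem \ref{thm:folner} makes $\{R_n(w)\}_n$ a deterministic F\"olner sequence for $\PP$-a.e.\ fixed $w$, you apply Kieffer fibrewise and conclude by Fubini; convergence in probability suffices for the statement of Theorem \ref{thm: relative complexity of RWRS}, which is exactly how the paper proceeds (Lemma \ref{lem: SMB for the Range}). With that substitution your proof closes; the remaining calibration $\epsilon\mapsto\epsilon'(\epsilon)$ you worry about is handled uniformly in $n$ because $\hat{\mathsf c}=\mathsf c\,\mathtt b_d(n)\log(n)/n$ is a constant.
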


\begin{remark}
This Theorem states that the rate of growth of the complexity is of the order $\#R(n)$ where $R(n)$ is the range of the random walk up to time $n$. This conclusion is similar to the conclusion of Aaronson for the case where the random walk is in the domain of attraction of an $\alpha$-stable random variable with $1<\alpha\leq 2$. Our method of proof can apply to these cases as well. In addition since we are not using the full theory of weak convergence of local times one can hope that this method will apply also to a wider class of dependent jump distributions. 
\end{remark}

Two probability preserving transformations $\left( X_i,\mathcal{B}_i,m_i,T_i \right), 1=1,2,$ are relatively isomorphic over the factors $\mathcal{C}_i\subset \mathcal{B}_i$ if there exists a measurable isomorphism $\pi:\left( X_1,\mathcal{B}_1,m_1,T_1 \right)\to\left( X_2,\mathcal{B}_2,m_2,T_2 \right)$ such that $\pi^{-1}\mathcal{C}_2=\mathcal{C}_1$. The following corollary follows from Theorem \ref{thm: relative complexity of RWRS} together with \cite[Corollary 4] {Aaronson}. 

\begin{corollary}
Suppose that $\left(Z_i,\BB_{Z_i},m_i,T_i \right),\ i=1,2$, are two Random walks in random sceneries with
 strongly aperiodic $\mathbb{Z}^2$ valued jump random variable $\xi$ which satisfy \textbf{A2} and their sceneries $\cS^{(i)}$ have finite entropies.
 \\ If these two systems are isomorphic over their bases $\mathcal{B}_{\Omega_i}$ then 
\[
\sqrt{ \det\left( \Sigma_1 \right)}h(\cS ^{(1)})=\sqrt{\det\left( \Sigma_2\right)}h(\cS^{(2)}).
\] 
\end{corollary}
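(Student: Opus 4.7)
The strategy is to combine the asymptotic formula of Theorem \ref{thm: relative complexity of RWRS}(b) applied to each of the two systems with the fact that relative complexity is invariant under relative isomorphism, which is the content of Aaronson's \cite[Corollary 4]{Aaronson}. Since both systems are RWRS with jump law satisfying \textbf{A2}, the joint hypotheses of Theorem \ref{thm: relative complexity of RWRS}(b) apply to each.

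Because $h\bigl(\cS^{(i)}\bigr)<\infty$, the skew product $T_i$ has finite entropy relative to its base $\BB_{\Omega_i}$ and hence admits a finite generating partition $\beta^{(i)}\in\mathfrak{B}(Z_i)$. Theorem \ref{thm: relative complexity of RWRS}(b) then gives, for $i=1,2$,
\[
\frac{\log n}{n}\,\mathcal{K}_{\BB_{\Omega_i}}\!\bigl(\beta^{(i)},n,\epsilon\bigr)\ \xrightarrow[n\to\infty,\,\epsilon\to 0]{m}\ 2\pi\sqrt{\det(\Sigma_i)}\,h\bigl(\cS^{(i)}\bigr).
\]
Now let $\pi:Z_1\to Z_2$ be the hypothesised relative isomorphism, so $\pi T_1=T_2\pi$ and $\pi^{-1}\BB_{\Omega_2}=\BB_{\Omega_1}$. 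Then $\tilde\beta:=\pi^{-1}\beta^{(2)}$ is a $T_1$-generating partition; the Hamming pseudo-metrics obey $\bar d_n^{(\tilde\beta)}(x,y)=\bar d_n^{(\beta^{(2)})}(\pi x,\pi y)$; and since $\pi$ identifies the base $\sigma$-algebras it identifies the associated conditional measures. Consequently, as $\BB_{\Omega_1}$-measurable random variables,
\[
\mathcal{K}_{\BB_{\Omega_1}}(\tilde\beta,n,\epsilon)(x)=\mathcal{K}_{\BB_{\Omega_2}}(\beta^{(2)},n,\epsilon)(\pi x),
\]
which is essentially the statement of Aaronson's Corollary 4.

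Finally, by Theorem \ref{thm: Aaronson's RC}(a) applied to $T_1$, the limit in probability of $\frac{\log n}{n}\mathcal{K}_{\BB_{\Omega_1}}(\cdot,n,\epsilon)$ is independent of the choice of generating partition, so we may replace $\beta^{(1)}$ by $\tilde\beta$ in the first display. Combining with the analogous statement for $i=2$ yields
\[
2\pi\sqrt{\det(\Sigma_1)}\,h\bigl(\cS^{(1)}\bigr)=2\pi\sqrt{\det(\Sigma_2)}\,h\bigl(\cS^{(2)}\bigr),
\]
and dividing by $2\pi$ gives the claim. The one genuine external input is Aaronson's Corollary 4, but once that is granted the argument is pure bookkeeping: all of the hard probabilistic work -- the self-intersection local time estimates, the F\"olner property of the range, and the Hamming-ball counting -- is already absorbed into Theorem \ref{thm: relative complexity of RWRS}(b), so no further obstacles arise beyond matching normalisations.
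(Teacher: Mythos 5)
Your argument is correct and is exactly the derivation the paper intends: the paper gives no separate proof, simply citing Theorem \ref{thm: relative complexity of RWRS} together with Aaronson's Corollary 4, and your fleshing-out (transfer of $\mathcal{K}_{\BB_{\Omega}}$ under the relative isomorphism, then generator-independence via Theorem \ref{thm: Aaronson's RC}(a)) is the standard way to make that citation precise. The only cosmetic point is that the convergence statements should read $\log\mathcal{K}_{\BB_{\Omega_i}}$ rather than $\mathcal{K}_{\BB_{\Omega_i}}$ (a typo you inherited from the paper's own statement of Theorem \ref{thm: relative complexity of RWRS}).
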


\section{The range of the random walk} \label{sec: range}
Let $R(n)= \{ S(1), \cdots , S(n)\}$, be the range of the random walk and for $x\in\mathbb{Z}^d$ define the local time, 
$$l(n,x) = \sum_{j=1}^n \mathbf{1}\{S(j) = x\}.$$ Denote by $\mathcal{F}$ the $\sigma$-algebra generated by $\left\{X_n\right\}_{n=1}^\infty$

%
The following theorem extends \cite[Theorem~2]{cerny07}.
\begin{theorem}\label{thm:distribution}
Let $Y_n$ be a point chosen uniformly at random from $R(n)$, that is
\begin{equation}\label{eq:yn}
\rP[ Y_n=x \mid \mathcal{F}] = \frac{ \mathbbm{1}\{ x \in R(n)\}}{\# R(n)}.
\end{equation}
\begin{enumerate}[\upshape(i)]
\item If \textnormal{\bf A1} holds, then
\begin{equation}
\mathbb{P} \Big[ \pi \gamma \frac{l(n, Y_n)}{\log n} \geq u \Big | \mathcal{F} \Big] \to \re^{-u}, \quad \text{a.s. as $n\to \infty$;}
\end{equation}
\item  If \textnormal{\bf A2}(\cite[Theorem~2]{cerny07}) or {\bf A2'} holds then
\begin{equation}
\mathbb{P} \Big[ 2 \pi \sqrt{\det(\Sigma)} \frac{l(n, Y_n)}{\log n} \geq u \Big | \mathcal{F} \Big] \to \re^{-u}, \quad \text{a.s. as $n\to \infty$;}
\end{equation}
\end{enumerate}
\end{theorem}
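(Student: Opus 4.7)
My approach is the method of moments, applied conditionally on $\mathcal{F}$. Writing $c_\star = \pi\gamma$ under \textbf{A1} and $c_\star = 2\pi\sqrt{\det\Sigma}$ under \textbf{A2}/\textbf{A2'}, and noting that for any $k\geq 1$
\[
\rE\big[\,l(n,Y_n)^k \,\big|\, \mathcal{F}\,\big] \;=\; \frac{1}{\#R(n)}\sum_{x\in R(n)} l(n,x)^k \;=\; \frac{\Xi_k(n)}{\#R(n)},
\]
where $\Xi_k(n):=\sum_{x\in\mathbb{Z}^d} l(n,x)^k$ is the $k$-fold self-intersection local time, and since the exponential distribution on $[0,\infty)$ is determined by its moments, the claim reduces to proving that, almost surely,
\[
\frac{c_\star^k\,\Xi_k(n)}{\#R(n)\,(\log n)^k} \;\longrightarrow\; k!, \qquad n\to\infty,\ \forall k\geq 1.
\]

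The range asymptotic $\#R(n)\log n/n \to 1/c_\star$ a.s.\ is classical (Dvoretzky--Erd\H{o}s under \textbf{A2}/\textbf{A2'}, and a Spitzer-type argument from the local CLT $p_0(m):=\rP[S_m=0]\sim 1/(c_\star m)$ in the Cauchy case \textbf{A1}); this is the sort of statement the authors advertise as handled in the appendix. The substantive step is therefore
\[
\Xi_k(n)\big/\big(n(\log n)^{k-1}\big)\;\longrightarrow\; k!/c_\star^{k-1}\quad\text{a.s.}
\]
For the first moment I would expand
\[
\rE[\Xi_k(n)] \;=\; k!\sum_{0<j_1<\cdots<j_k\leq n}\prod_{i=2}^k p_0(j_i-j_{i-1}),
\]
substitute the LCLT, and recognise a Riemann sum on the simplex $\{0<t_1<\cdots<t_k<1\}$ to obtain $\rE[\Xi_k(n)]\sim k!\,n(\log n)^{k-1}/c_\star^{k-1}$. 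For concentration I would bound $\mathrm{var}(\Xi_k(n))=O\big(n^2(\log n)^{2k-3}\big)$, a full $\log n$ factor smaller than $(\rE\Xi_k(n))^2$, by expanding $\rE[\Xi_k(n)^2]$ into sums over pairs of ordered $k$-tuples and classifying their interleavings. Chebyshev plus Borel--Cantelli along a sufficiently sparse subsequence $\{n_j\}$ with $n_{j+1}/n_j\to 1$, combined with the monotonicity of $n\mapsto\Xi_k(n)$, upgrades this to a.s.\ convergence for each fixed $k$; a diagonal argument over $k\in\mathbb{N}$ handles all $k$ simultaneously, and the theorem follows.

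The main obstacle is the variance bound. The combinatorial expansion of $\Xi_k(n)^2$ produces many interleaving patterns for the union of the two $k$-tuples; one must show that the ``decoupled'' pairings reproduce $(\rE\Xi_k(n))^2$ to leading order while every other configuration loses at least one power of $\log n$. Under \textbf{A2} this is exactly \cite[Theorem~2]{cerny07}; under \textbf{A2'} the same argument applies after restricting to the even sublattice to absorb the period; under \textbf{A1} the 2D LCLT is replaced by the stable LCLT for the Cauchy domain of attraction, which yields the same $1/m$ scale for $p_0(m)$ and therefore leaves the structural combinatorial argument intact. Making the variance estimate sharp enough to allow the subsequence Borel--Cantelli step, uniformly in the class of walks covered by \textbf{A1} and \textbf{A2'}, is the crux.
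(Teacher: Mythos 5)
Your overall strategy is the paper's: condition on $\mathcal{F}$, observe that the $k$-th conditional moment of $c_\star l(n,Y_n)/\log n$ equals $c_\star^{k} L_n(k)/\big(\#R(n)(\log n)^{k}\big)$, prove a strong law of large numbers for the self-intersection local times $L_n(k)$ (this is Proposition~\ref{propn:lln}) together with the range asymptotics $\#R(n)\sim c_\star n/\log n$, and conclude by moment-determinacy of the exponential law. The paper carries out exactly this moment computation (for the two-point analogue) in the proof of Theorem~\ref{thm:folner}, derives the expectation asymptotics via a Tauberian theorem rather than your Riemann sum on the simplex, and handles the periodicity in \textbf{A2'} by summing over the period, which is the same device as your restriction to the even sublattice. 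Under \textbf{A2} the statement is simply quoted from \cite{cerny07}, so the genuinely new cases are \textbf{A1} and \textbf{A2'}.

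The one step that does not close as written is the variance bound, which you correctly identify as the crux but then state with an insufficient exponent. You aim for $\var(\Xi_k(n))=O\big(n^2(\log n)^{2k-3}\big)$, ``a full $\log n$ factor smaller than $(\rE\Xi_k(n))^2$''. One logarithm is not enough for a Chebyshev--Borel--Cantelli argument: Chebyshev then gives a relative-deviation probability of order $1/\log n_j$, and summability of $\sum_j 1/\log n_j$ forces $\log n_j/j\to\infty$, hence $\limsup_j n_{j+1}/n_j=\infty$, which destroys the interpolation by monotonicity because the means $\rE\Xi_k(n_j)\asymp n_j(\log n_j)^{k-1}$ at consecutive subsequence points then differ by unbounded factors; conversely, any subsequence with $n_{j+1}/n_j\to 1$ has $\log n_j=o(j)$ and therefore $\sum_j 1/\log n_j=\infty$. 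What is actually true, and what the paper uses (quoting \cite{DU15} under \textbf{A1}, and redoing the interlacement computation under \textbf{A2'}), is $\var(L_n(k))=O\big(n^2(\log n)^{2k-4}\big)$, two logarithms below the squared mean; this makes geometric subsequences $n_j=\lfloor\rho^j\rfloor$, $\rho>1$, summable, and one recovers the full almost sure limit by letting $\rho\downarrow 1$. So you must strengthen your variance estimate by one power of $\log n$ (or replace Chebyshev by a higher-moment inequality) before the almost sure upgrade goes through.
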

The following is the main result of this section.
\begin{theorem}
Suppose that \textnormal{\bf A1}, \textnormal{\bf A2} or {\bf A2'} holds, then $R(n)$ is almost surely a  F\"olner sequence, 
that is for all $w \in \mathbb{Z}^d$
\begin{equation}
\frac{\# \Big[R(n) \triangle \big(R(n)+w\big)\Big]}{\# R(n)} \to 0.
\label{eq:folner}
\end{equation}
\label{thm:folner}
\end{theorem}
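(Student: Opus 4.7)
The plan is to prove, for each fixed $w$, that the one-sided bad set $B_n^+:=\{x\in R(n):x+w\notin R(n)\}$ satisfies $|B_n^+|/\#R(n)\to 0$ almost surely; swapping $w$ with $-w$ gives the same bound for $B_n^-:=\{x\in R(n):x-w\notin R(n)\}$, and together these control $\#[R(n)\triangle(R(n)+w)]\le|B_n^+|+|B_n^-|$.

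The heuristic is that if $x\in R(n)$ is visited many times and yet $x+w\notin R(n)$, then every one of those visits was followed by an excursion that failed to reach $x+w$ within any fixed window of length $K$, an atypical event. To make this precise, set $\tau_w:=\inf\{k\ge 1:S_k=w\}$, which is finite a.s.\ since the walks in \textbf{A1}, \textbf{A2}, \textbf{A2'} are recurrent and irreducible (so hit every point of $\mathbb{Z}^d$ almost surely). Hence $\delta_K:=\mathbb{P}(\tau_w>K)\downarrow 0$ as $K\to\infty$. Fix $K$ and define
\[
Z_j:=\mathbbm{1}\bigl\{w\notin\{S_{j+1}-S_j,\dots,S_{j+K}-S_j\}\bigr\}.
\]
Since $(\xi_k)$ is i.i.d., $(Z_j)_{j\ge 0}$ is a stationary ergodic sequence with $\mathbb{E} Z_j=\delta_K$, so by Birkhoff's theorem $\frac{1}{n}\sum_{j=0}^{n-K}Z_j\to\delta_K$ a.s. The key deterministic observation is that if $x\in B_n^+$ and $S_j=x$ with $j\le n-K$, then $Z_j=1$ (otherwise $x+w\in R(n)$); summing over $x\in B_n^+$ yields
\[
\sum_{x\in B_n^+}l(n-K,x)\ \le\ \sum_{j=0}^{n-K}Z_j.
\]

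To finish, I combine this with Theorem \ref{thm:distribution}. For $\epsilon>0$ let $G_\epsilon(n):=\{x\in R(n):l(n-K,x)\ge\epsilon\log n\}$; since $l(n,x)-l(n-K,x)\le K$ is negligible on the $\log n$ scale, Theorem \ref{thm:distribution} gives $|R(n)\setminus G_\epsilon(n)|/\#R(n)\to 1-e^{-c\epsilon}$ a.s., where $c$ is the constant ($\pi\gamma$ or $2\pi\sqrt{\det(\Sigma)}$) from that theorem. Splitting $|B_n^+|\le|B_n^+\cap G_\epsilon(n)|+|R(n)\setminus G_\epsilon(n)|$ and using the previous display together with the classical law of large numbers $\#R(n)\sim c' n/\log n$ a.s.\ (shown for the three cases in the Appendix), I obtain
\[
\limsup_{n\to\infty}\frac{|B_n^+|}{\#R(n)}\ \le\ \frac{\delta_K}{\epsilon c'}+\bigl(1-e^{-c\epsilon}\bigr)\quad\text{a.s.}
\]
Letting $\epsilon\downarrow 0$ and then $K\to\infty$ drives both terms to zero. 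The main obstacle I foresee is not an isolated hard step but the need for \emph{simultaneous} almost sure control of three ingredients -- the Birkhoff average of $(Z_j)$, the range asymptotics, and the conditional exponential law of Theorem \ref{thm:distribution} -- since $|B_n^+|/\#R(n)$ has no useful monotonicity in $n$ and mere convergence in probability of any one ingredient would not close the $\limsup$ argument above.
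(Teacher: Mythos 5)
Your argument is correct in substance, but it takes a genuinely different route from the paper. The paper proves the F\"olner property by a method of moments applied to the joint self-intersection local times $L_{n,w}(\alpha)=\sum_x l(n,x)^\alpha l(n,x+w)^\alpha$: Proposition~\ref{lem:lln} identifies the asymptotics of $\rE L_{n,w}(\alpha)$ and bounds its variance, whence the conditional moments of $Z_n:=\gamma_d^2\,l(n,Y_n)l(n,Y_n+w)/(\log n)^2$ converge to those of $Y^2$ with $Y\sim\mathrm{Exp}(1)$; letting $\alpha\downarrow 0$ then gives $\#(R(n)\cap(R(n)+w))/\#R(n)=\rP(Z_n>0\mid\mathcal F)\to\rP(Y^2>0)=1$ (with a lazy-walk reduction for {\bf A2'}). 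You instead take Theorem~\ref{thm:distribution} as a black box and combine it with recurrence and Birkhoff's theorem: the pointwise inequality $\sum_{x\in B_n^+}l(n-K,x)\le\sum_j Z_j$ is correct (different visits to distinct $x$ index disjoint sets of times $j$, and each such $j\le n-K$ forces $Z_j=1$), the sequence $(Z_j)$ is a $K$-dependent factor of the i.i.d.\ shift so the ergodic theorem applies, and $\delta_K\downarrow 0$ follows from recurrence plus strong aperiodicity (irreducibility for {\bf A2'}). Your approach buys a substantially softer proof that avoids the entire expectation and variance computation for $L_{n,w}(\alpha)$, at the cost of invoking Theorem~\ref{thm:distribution} as an input (which is legitimate: in the paper that theorem rests on Proposition~\ref{propn:lln} and \cite{cerny07}, not on the F\"olner property); the paper's moment computation is self-contained at that point and also yields the full limiting law of $l(n,Y_n)l(n,Y_n+w)$, which is more information than the F\"olner property alone. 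Your concern about simultaneous a.s.\ control is unfounded, since all three ingredients converge almost surely and one simply intersects three full-measure events over countably many pairs $(K,\epsilon)$.

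One small correction: the order of limits in your last step is reversed. With $K$ fixed, letting $\epsilon\downarrow 0$ sends $\delta_K/(\epsilon c')$ to $+\infty$. Since your bound
\[
\limsup_{n\to\infty}\frac{|B_n^+|}{\#R(n)}\ \le\ \frac{\delta_K}{\epsilon c'}+\bigl(1-e^{-c\epsilon}\bigr)
\]
holds for every pair $(K,\epsilon)$, you should first let $K\to\infty$ for fixed $\epsilon$ (killing the first term) and then let $\epsilon\downarrow 0$ (killing the second). This is a trivial fix and does not affect the validity of the argument.
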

\subsection{Proof of Theorem~\ref{thm:distribution}.}
First of all, recall that the result under {\bf A2} has been proven in \cite{cerny07}. We will therefore focus on the remaining cases.
{We write $C$ for a generic positive constant. }
\subsubsection{Auxiliary results}
Before we embark on the proof of Theorem~\ref{thm:distribution}, we require several standard results. 

The next result is a direct consequence of strong aperiodicity and Assumptions \textbf{A1} and \textbf{A2}. 
Its proof is a standard application of Fourier inversion, and is included in the Appendix for the sake of completeness.
\begin{lemma}
Suppose that \textbf{A1} or \textbf{A2} holds. 
Then with $\gamma_1:= \pi \gamma$ and $\gamma_2:= 2\pi\sqrt{|\Sigma|}$
\begin{align}
\sup_{w}\rP[S(m)=w]&=O\left(\frac{1}{m}\right), \label{lclt_upperbound} \\
\rP[S(m)=w] - \rP[S(m)=0] &= O\left(\frac{|w|}{m^2}\right)\label{eq:potentialbound}\\
\rP[S(m)=w]&\sim \frac{1}{\gamma_d m}\label{lclt_exact}.
\end{align}
\label{lem:llt}
\end{lemma}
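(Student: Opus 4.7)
My plan is to use the Fourier inversion formula for $\mathbb{Z}^d$-valued random walks,
\begin{equation*}
\rP[S(m)=w]=\frac{1}{(2\pi)^d}\int_{[-\pi,\pi]^d}\re^{-\ri t\cdot w}\,\phi_\xi(t)^m\,\rd t,
\end{equation*}
and then split the integration into a fixed neighbourhood $U$ of the origin, where the local expansions supplied by \textbf{A1}/\textbf{A2} govern the behaviour of $\phi_\xi$, and its complement, on which strong aperiodicity forces $|\phi_\xi|$ to be bounded away from $1$.

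Off the origin, strong aperiodicity gives $|\phi_\xi(t)|<1$ for every $t\in[-\pi,\pi]^d\setminus\{0\}$; continuity and compactness then yield $|\phi_\xi(t)|\le 1-\delta$ on $[-\pi,\pi]^d\setminus U$, so the contribution of this piece to all three inequalities is $O((1-\delta)^m)$ and is absorbed in the error. On $U$, the plan is to upgrade the local expansions to the pointwise bounds $|\phi_\xi(t)|\le \re^{-c|t|}$ under \textbf{A1} and $|\phi_\xi(t)|\le \re^{-c|t|^2}$ under \textbf{A2}, by a Taylor argument combined with $\log(1-x)\le -x$. Rescaling $t=s/m$ in the one-dimensional case and $t=s/\sqrt{m}$ in the two-dimensional case then produces the overall $m^{-1}$ prefactor (matching the $1/m$ decay in \eqref{lclt_upperbound} and \eqref{lclt_exact}), with the rescaled integrands dominated by the fixed integrable functions $\re^{-c|s|}$ and $\re^{-c|s|^2}$ respectively.

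With these dominations in hand, the uniform bound \eqref{lclt_upperbound} is immediate from $|\re^{-\ri t\cdot w}|=1$. For the asymptotic \eqref{lclt_exact}, I would apply dominated convergence, using the pointwise limits $\phi_\xi(s/m)^m\to\re^{-\gamma|s|}$ (resp.\ the appropriate Gaussian limit under \textbf{A2}) together with $\re^{-\ri s\cdot w/m}\to 1$ (resp.\ $\re^{-\ri s\cdot w/\sqrt m}\to 1$) for each fixed $w$, and then compute the resulting explicit Cauchy / Gaussian integrals to recover the constants $\gamma_1=\pi\gamma$ and $\gamma_2=2\pi\sqrt{\det\Sigma}$. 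For \eqref{eq:potentialbound} I would expand
\begin{equation*}
\rP[S(m)=w]-\rP[S(m)=0]=\frac{1}{(2\pi)^d}\int_{[-\pi,\pi]^d}\bigl(\re^{-\ri t\cdot w}-1\bigr)\phi_\xi(t)^m\,\rd t,
\end{equation*}
and invoke $|\re^{-\ri t\cdot w}-1|\le |t|\,|w|$; the extra factor $|t|$ introduces additional decay in $m$ through a finite moment integral against the dominating exponential, yielding the advertised bound.

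The main technical obstacle I expect is making the passage from the $o(\cdot)$ expansions of $\phi_\xi$ at the origin into genuinely \emph{uniform} exponential upper bounds on a fixed neighbourhood of $0$, since this uniformity is what lets a single dominated-convergence argument deliver all three statements together. A secondary point is treating \textbf{A2'} (two-dimensional simple random walk), which is only aperiodic on the sub-lattice of even-parity points — here $|\phi_\xi|$ attains the value $1$ at $(\pm\pi,\pm\pi)$ as well as $0$, and one either restricts the inversion to the sub-lattice or separately extracts the contributions from all four corners of $[-\pi,\pi]^2$, after which the preceding scheme applies verbatim.
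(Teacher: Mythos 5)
Your proposal follows essentially the same route as the paper's appendix proof: Fourier inversion, a split into a small neighbourhood of the origin (where the expansions in \textbf{A1}/\textbf{A2} are upgraded to uniform bounds of the form $|\phi_\xi(t)|\le 1-c|t|$, resp.\ $1-c|t|^2$, hence exponential domination) and its complement (where strong aperiodicity gives $|\phi_\xi|\le 1-\delta$ and an exponentially small contribution). The only stylistic difference is in \eqref{lclt_exact}: the paper sandwiches $|\phi_\xi(t)|$ between $1-\gamma_1(\epsilon)|t|$ and $1-\gamma_2(\epsilon)|t|$ and evaluates the resulting integrals explicitly before sending $\epsilon\to0$, whereas you rescale $t=s/m$ and apply dominated convergence; the two are interchangeable.

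One point where your plan, as written, does not quite deliver the stated estimate: for \eqref{eq:potentialbound} in dimension $2$, the bound $|\re^{-\ri t\cdot w}-1|\le |t|\,|w|$ combined with the Gaussian domination gives $\int_{\mathbb{R}^2}|t|\,\re^{-cm|t|^2}\,\rd t=O(m^{-3/2})$, i.e.\ $O(|w|/m^{3/2})$ rather than the claimed $O(|w|/m^2)$. To reach $m^{-2}$ one must exploit the cancellation of the linear term $-\ri t\cdot w$ in the expansion of the exponential (a parity argument against the essentially even integrand $\phi_\xi(t)^m$), or else observe that $O(|w|/m^{3/2})$ is already summable in $m$, which is all that is used downstream; note the paper itself writes out only $d=1$ in full and dismisses $d=2$ as ``similar, using polar coordinates''. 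Finally, your closing remark on \textbf{A2'} is not needed here: the lemma is stated only under \textbf{A1} and \textbf{A2}, and the simple random walk is treated separately (Lemma~\ref{lem:SRWlclt}) by citation.
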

\begin{lemma}Suppose that \textbf{A1} holds. Then  as $\lambda \uparrow 1$
\label{lem:lambdaasymptotic}
\begin{equation}
\frac{1}{2\pi} \int_{-\pi}^\pi \frac{\lambda \phi(t) \rd t}{1-\lambda \phi(t)}
\sim \frac{1}{\pi \gamma} \log\Big( \frac{1}{1-\lambda} \Big).
\label{eq:lambdaasymptotic}
\end{equation}
\end{lemma}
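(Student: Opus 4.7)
The plan is to recognise the left-hand side as the generating function of the return probabilities and then to transfer the local limit asymptotics through a standard Abelian argument. By Fourier inversion, for each $n \geq 1$ one has $\rP[S_n = 0] = \frac{1}{2\pi}\int_{-\pi}^\pi \phi(t)^n \, \rd t$. Since $|\lambda \phi(t)| \leq \lambda < 1$ for all $t$, the integrand $\lambda \phi(t)/(1-\lambda\phi(t))$ is bounded (in modulus by $\lambda/(1-\lambda)$), so expanding as a geometric series and exchanging sum and integral via Fubini gives
\[
\frac{1}{2\pi}\int_{-\pi}^\pi \frac{\lambda\phi(t)}{1-\lambda\phi(t)}\, \rd t = \sum_{n=1}^\infty \lambda^n \, \rP[S_n = 0].
\]

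Next I would invoke the local limit theorem already stated as \eqref{lclt_exact} in Lemma~\ref{lem:llt}, namely $\rP[S_n=0] \sim (\pi\gamma n)^{-1}$ as $n\to\infty$. Combined with the elementary identity $\sum_{n\geq 1} \lambda^n/n = \log(1/(1-\lambda))$, it suffices to show that this tail asymptotic transfers through the generating function. For fixed $\epsilon > 0$, choose $N = N(\epsilon)$ so that $|\rP[S_n = 0] - (\pi\gamma n)^{-1}| \leq \epsilon/n$ for all $n \geq N$. Splitting the sum at $N$, the first $N-1$ terms are uniformly bounded in $\lambda \in (0,1)$, hence negligible compared to $\log(1/(1-\lambda)) \to \infty$. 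The tail lies between $(1\mp\epsilon)(\pi\gamma)^{-1} \sum_{n \geq N} \lambda^n/n$, which is itself $(1+o(1))(\pi\gamma)^{-1}\log(1/(1-\lambda))$. Dividing by $\log(1/(1-\lambda))$, then sending $\lambda\uparrow 1$ and $\epsilon \downarrow 0$, produces the stated equivalence.

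The main obstacle, if any, is really bundled into Lemma~\ref{lem:llt}: once the exact local limit asymptotic $\rP[S_n = 0] \sim 1/(\pi\gamma n)$ is in hand, the rest is a routine Abelian transfer. Strong aperiodicity enters only indirectly, via that lemma, which is why no direct use of it appears in the argument sketched above. A sanity check worth performing is that the swap of sum and integral is legitimate for every $\lambda<1$; this is immediate from the uniform bound on the integrand noted above, so no additional care is needed.
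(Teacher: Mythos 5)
Your proof is correct, but it takes a genuinely different route from the paper's. The paper works entirely in frequency space: it splits $[-\pi,\pi]$ into $|t|\le\delta$ and $|t|>\delta$, uses strong aperiodicity to bound the high-frequency contribution by a constant $C D(\delta)^{-1}$ uniformly in $\lambda$, replaces $\phi(t)$ by $1-\gamma|t|$ on the low-frequency part at the cost of an error $r(\delta)\log(1/(1-\lambda))$ with $r(\delta)\to 0$, and then evaluates the resulting model integral explicitly. You instead expand $\lambda\phi/(1-\lambda\phi)$ as a geometric series to identify the left-hand side with $\sum_{n\ge1}\lambda^n\,\rP[S_n=0]$ (the Fubini step is indeed harmless for $\lambda<1$), feed in the local limit asymptotic \eqref{lclt_exact} of Lemma~\ref{lem:llt}, and conclude by an Abelian transfer. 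This is legitimate and non-circular: \eqref{lclt_exact} is proved in the appendix directly from \textbf{A1} and strong aperiodicity, without reference to the present lemma. The trade-offs are worth noting. Your argument is shorter, but only because the analytic work has been offloaded to the full local limit theorem, which is a strictly stronger input than the paper needs here (the paper's proof uses only the expansion $\phi(t)=1-\gamma|t|+o(|t|)$ and the aperiodicity bound $|\phi(t)|<1-D(\delta)$). On the other hand, your Abelian direction needs no regularity side condition, whereas the paper later goes the opposite way, deducing the Green's function asymptotic \eqref{eq:green} from this lemma via Karamata's Tauberian theorem; on your route that partial-sum asymptotic would fall out of \eqref{lclt_exact} directly. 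One small presentational point: to get the two-sided bound on the tail you should write it as $\bigl|\sum_{n\ge N}\lambda^n\rP[S_n=0]-(\pi\gamma)^{-1}\sum_{n\ge N}\lambda^n/n\bigr|\le\epsilon\sum_{n\ge N}\lambda^n/n\le\epsilon\log(1/(1-\lambda))$, which is what your sandwich implicitly uses; as stated the phrase ``lies between $(1\mp\epsilon)(\pi\gamma)^{-1}\sum\lambda^n/n$'' conflates the additive error $\epsilon/n$ with a multiplicative one, though the conclusion is unaffected.
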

Since simple random walk is not aperiodic, to prove Theorem~\ref{thm:distribution} for the case {\bf A2'} we recall the following (see \cite[Theorem~1.2.1]{Lawler91}).
\begin{lemma}\label{lem:SRWlclt}
Under {\bf A2'} 
\begin{align}
\sup_x\rP[S(m)=x] = O\Big(\frac{1}{m}\Big),\label{eq:SRWlclt}\\
\sum_{k=0}^n \rP[S_m=0] \sim \frac{1}{\pi } \log n\label{eq:SRWgreen}
\end{align}
\end{lemma}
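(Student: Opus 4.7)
The plan is to exploit the classical rotation trick reducing two-dimensional simple random walk to two independent one-dimensional simple random walks. Writing the steps as $\xi_k = (\xi_k^{(1)}, \xi_k^{(2)})$, I would set $U_n := \sum_{k=1}^n (\xi_k^{(1)} + \xi_k^{(2)})$ and $V_n := \sum_{k=1}^n (\xi_k^{(1)} - \xi_k^{(2)})$. Inspecting the four possible values of $\xi_k$ shows that the joint increment $(\Delta U_k, \Delta V_k)$ is uniform on $\{\pm 1\}^2$, so $U$ and $V$ are \emph{independent} simple random walks on $\mathbb{Z}$. Since $x \mapsto (x_1 + x_2, x_1 - x_2)$ is a bijection, for every $x$ one has
\[
\rP[S_m = x] = \rP[U_m = x_1 + x_2]\,\rP[V_m = x_1 - x_2].
\]

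Both assertions then reduce to standard one-dimensional estimates. For the uniform bound \eqref{eq:SRWlclt} I would combine the product formula above with
\[
\sup_u \rP[U_m = u] = \binom{m}{\lfloor m/2\rfloor} 2^{-m} = O(m^{-1/2}),
\]
which is immediate from Stirling's formula, to obtain $\sup_x \rP[S_m = x] = O(1/m)$. For the Green's function asymptotic \eqref{eq:SRWgreen}, $\rP[S_m = 0] = 0$ for odd $m$, while for $m = 2k$,
\[
\rP[S_{2k} = 0] = \rP[U_{2k} = 0]^2 = \Big(\binom{2k}{k}2^{-2k}\Big)^2 \sim \frac{1}{\pi k}
\]
again by Stirling. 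Summing over $k \leq n/2$ and comparing to $\sum_k 1/(\pi k) \sim \frac{1}{\pi}\log n$ then closes the estimate.

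There is essentially no genuine obstacle once the rotation trick is in place: both claims collapse to Stirling's approximation of the central binomial coefficient. An alternative, slightly heavier route would be direct Fourier inversion using $\phi(t) = \frac{1}{2}(\cos t_1 + \cos t_2)$, in which case one must carefully track the extra peak of $|\phi|$ at $(\pi,\pi)$ coming from the period-$2$ structure and perform a two-point Gaussian saddle-point analysis; the rotation trick avoids this bookkeeping altogether and is what I would use in the final write-up.
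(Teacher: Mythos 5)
Your argument is correct, but it is worth noting that the paper does not actually prove this lemma: it simply recalls it from Lawler's book (Theorem~1.2.1 of \emph{Intersections of Random Walks}), since the two-dimensional simple random walk is periodic and therefore falls outside the Fourier-analytic local limit estimates proved in the Appendix under strong aperiodicity. What you supply is a short, self-contained replacement for that citation via the rotation trick: the increments $(\Delta U_k,\Delta V_k)=(\xi_k^{(1)}+\xi_k^{(2)},\,\xi_k^{(1)}-\xi_k^{(2)})$ are indeed uniform on $\{\pm1\}^2$, so $U$ and $V$ are independent one-dimensional simple random walks and $\rP[S_m=x]=\rP[U_m=x_1+x_2]\,\rP[V_m=x_1-x_2]$; the uniform bound $O(1/m)$ and the asymptotic $\rP[S_{2k}=0]\sim 1/(\pi k)$ (hence $\sum_{m\le n}\rP[S_m=0]\sim \pi^{-1}\log n$, consistent with $\gamma_2=2\pi\sqrt{\det\Sigma}=\pi$ since $\Sigma=\tfrac12 I$) then follow from Stirling applied to the central binomial coefficient, exactly as you say. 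The only caveats are cosmetic: the supremum of $\rP[U_m=u]$ is attained at the admissible $u$ of the same parity as $m$, and the sum in the second claim should be read as running over the time index $m$ (the paper's $\sum_{k=0}^n\rP[S_m=0]$ is a typo). Your approach buys a proof that is elementary and special to the simple random walk; the paper's citation (and its alternative device of passing to the lazy walk elsewhere in the proof of Theorem~\ref{thm:folner}) is what one would need for a general periodic walk, where no exact product decomposition is available.
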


For $\alpha \geq 0$ we define the $\alpha$-\textit{fold self-intersection local time}
\begin{align*}
L_{n}(\alpha) 
&:= \sum_{x \in \mathbb{Z}^d} l(n,x)^{\alpha}, \qquad \alpha >0\\
L_n(0) &:=\lim_{\alpha \downarrow 0} L_n(\alpha)= \sum_{x \in \mathbb{Z}^d} \mathbf{I}\{ l(n,x)>0\} =\# R(n).
\end{align*}
We will need the following strong law of large numbers which is given in \cite{cerny07} for the case {\bf A2}.
The case {\bf A2'} is included in order to demonstrate how one can handle the periodic case.
\begin{proposition}
For $d=1, 2$, and any integer $k \geq 1$ if \textnormal{\bf A1} or \textnormal{\bf A2'}  holds then as $n \to \infty$
\begin{align}
\rE L_n (k) 
&\sim \frac{\Gamma(k+1)}{(\pi \gamma_d)^{k-1}} n (\log n)^{k-1}, \label{eq:expasymptotic}\\
\var( L_n(k))
&= O\big( n^2 (\log n)^{2k -4}\big)\label{eq:varasymptotic},\\
\lim_{n\to\infty}\frac{n (\log n)^{k-1}}{(\pi \gamma_d)^{k-1}} L_n(k) &=
\Gamma(k+1), \qquad \text{almost surely}\label{eq:lln1d}.
\end{align}
\label{propn:lln}
\end{proposition}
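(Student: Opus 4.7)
The plan is to prove the three assertions of Proposition~\ref{propn:lln} in order: the asymptotic of $\rE L_n(k)$ via a direct expansion using the Markov property and the local limit theorem, the variance bound by a slightly more intricate combinatorial analysis of the same type, and finally the almost sure statement by Chebyshev, Borel--Cantelli along a subsequence, and the monotonicity of $n \mapsto L_n(k)$.

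For the expectation, I would expand
\[
L_n(k) = \sum_{x \in \mathbb{Z}^d} l(n,x)^k = \sum_{1 \leq j_1, \ldots, j_k \leq n} \mathbf{1}\{S_{j_1} = \cdots = S_{j_k}\}.
\]
The dominant contribution is from strictly ordered $j_1 < \cdots < j_k$, which appear with multiplicity $k!$; tuples with coincidences contribute lower order terms. The Markov property gives $\rP[S_{j_1} = \cdots = S_{j_k}] = \prod_{i=1}^{k-1} \rP[S_{j_{i+1} - j_i} = 0]$, so setting $m_0 = j_1$ and $m_i = j_{i+1} - j_i$ for $i \geq 1$ the sum becomes
\[
k! \sum_{\substack{m_0, \ldots, m_{k-1} \geq 1 \\ m_0 + \cdots + m_{k-1} \leq n}} \prod_{i=1}^{k-1} \rP[S_{m_i} = 0].
\]
Under \textbf{A1} by \eqref{lclt_exact} and under \textbf{A2'} by \eqref{eq:SRWgreen} we have $\sum_{m=1}^{n} \rP[S_m = 0] \sim (\log n)/(\pi \gamma_d)$; summing over the $k-1$ gaps contributes $[(\log n)/(\pi \gamma_d)]^{k-1}(1+o(1))$, while summing over $m_0 \in [1,n]$ contributes a factor $n(1+o(1))$, yielding \eqref{eq:expasymptotic}.

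For the variance I would perform the analogous expansion of $\rE L_n(k)^2$ over pairs of strictly ordered $k$-tuples $(j, j')$, and classify the terms by $r := |\{j_1, \ldots, j_k\} \cap \{j_1', \ldots, j_k'\}|$ and, when $r = 0$, by whether $S_{j_1} = S_{j_1'}$. The $r = 0$ terms with distinct cluster values, well separated in time, reproduce $(\rE L_n(k))^2$ up to a correction of order $n^2 (\log n)^{2k-4}$ arising from interleaved configurations; this correction is controlled using the potential-kernel bound \eqref{eq:potentialbound}. The remaining terms (either $r \geq 1$, or $r = 0$ with $S_{j_1} = S_{j_1'}$) force at least $k+1$ visits to a single site and contribute at most $O(n(\log n)^{2k-2}) = o(n^2(\log n)^{2k-4})$. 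This establishes \eqref{eq:varasymptotic}.

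Combining \eqref{eq:expasymptotic} and \eqref{eq:varasymptotic}, Chebyshev yields $\rP\bigl[|L_n(k)/\rE L_n(k) - 1| > \epsilon\bigr] = O((\log n)^{-2})$. Fixing any $\alpha \in (1/2, 1)$ and setting $n_m := \lceil \re^{m^\alpha} \rceil$ gives $\sum_m (\log n_m)^{-2} < \infty$, so Borel--Cantelli provides $L_{n_m}(k)/\rE L_{n_m}(k) \to 1$ almost surely; since $n_{m+1}/n_m \to 1$ one also has $\rE L_{n_{m+1}}(k)/\rE L_{n_m}(k) \to 1$. Because $n \mapsto L_n(k)$ is nondecreasing, for $n \in [n_m, n_{m+1}]$ we sandwich
\[
\frac{L_{n_m}(k)}{\rE L_{n_{m+1}}(k)} \leq \frac{L_n(k)}{\rE L_n(k)} \leq \frac{L_{n_{m+1}}(k)}{\rE L_{n_m}(k)},
\]
both bounds tending almost surely to $1$, which yields \eqref{eq:lln1d}. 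The main obstacle is the variance estimate: the $O(n^2(\log n)^{2k-4})$ gain over the trivial product bound requires isolating the exact cancellation between $\rE L_n(k)^2$ and $(\rE L_n(k))^2$, which hinges on \eqref{eq:potentialbound} to replace $\rP[S_m = w]$ by $\rP[S_m = 0]$ with an error controlled by $|w|/m^2$. The combinatorial bookkeeping of the interleaved configurations follows \v{C}ern\'y~\cite{cerny07}; the only new work is verifying that the argument survives replacing the 2D Gaussian local CLT by its 1D Cauchy counterpart for \textbf{A1} and by the simple-random-walk LCLT for \textbf{A2'}, both of which are supplied by Lemmas~\ref{lem:llt} and \ref{lem:SRWlclt}.
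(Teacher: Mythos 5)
Your treatment of the expectation and of the passage from Chebyshev to almost sure convergence is sound and close to the paper's (for \textbf{A1} the paper goes through the generating function $\sum_n a_b(n)\lambda^n$ and Karamata's Tauberian theorem rather than summing \eqref{lclt_exact} directly, but your direct summation works once you handle the coupling constraint $\sum m_i\le n$ by the usual restriction argument; your subsequence $n_m=\lceil \re^{m^\alpha}\rceil$ is a clean variant of the paper's geometric subsequences). The problem is the variance bound \eqref{eq:varasymptotic}, which is the real content of the proposition, and there your plan has a genuine gap. First, the key cancellation is not controlled by \eqref{eq:potentialbound}: that is a \emph{spatial} difference bound $\rP[S(m)=w]-\rP[S(m)=0]=O(|w|/m^2)$, and in $L_n(k)=\sum_x l(n,x)^k$ there is no spatial shift $w$ at all. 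The configurations that force a cancellation are those where one $k$-tuple of times sits entirely inside a single gap of the other (interlacement index $v=2$ in the paper's terminology); their joint probability alone sums to order $n^2(\log n)^{2k-3}$, one logarithm too large, and what kills this is the \emph{temporal} difference $p(m)-p(m+q)$ with $q$ the length of the nested cluster, estimated via $p(m)\sim 1/(\gamma_d m)$ together with a second-order local bound. Your classification by $r=|\{j\}\cap\{j'\}|$ and by whether $S_{j_1}=S_{j_1'}$ does not isolate these terms: they have $r=0$ and distinct cluster values, yet are not "well separated in time", and conversely the genuinely interleaved terms ($v\ge 3$) need no cancellation at all — they are bounded by the positive part alone using only the sup-LCLT.

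Second, for \textbf{A2'} the estimates you invoke are not sufficient. The simple random walk has period $2$, so any pointwise bound of the form $|p(m)-\bar p(m)|\le C/m^2$ is false (half the $p(m)$ vanish), and Lemma~\ref{lem:SRWlclt} supplies only the supremum bound and the Green's function asymptotic \eqref{eq:SRWgreen}. The paper's proof for \textbf{A2'} has to pair consecutive times and use the period-summed estimate $|p(m)+p(m+1)-2\bar p(m)|\le C/m^2$ from \cite{LawlerLimic}, then sum the resulting telescoping differences; this is precisely the step your appeal to "Černý's bookkeeping" plus \eqref{eq:potentialbound} does not cover (for \textbf{A1} the paper simply cites \cite{DU15} for \eqref{eq:varasymptotic}, where the aperiodic version of this argument is carried out). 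To repair your proof you should reorganize the second moment by interlacement number, bound the $v\ge 3$ terms by $\rE L_n(k)^2$ restricted to those configurations using \eqref{eq:SRWlclt}, and carry out the $v=2$ cancellation with the period-summed second-order LCLT rather than \eqref{eq:potentialbound}.
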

\begin{proof}[Proof of Proposition~\ref{propn:lln}]
Once \eqref{eq:expasymptotic} and \eqref{eq:varasymptotic} have been established, 
\eqref{eq:lln1d} follows for geometric subsequences by Chebyshev's inequality, and the complete result by the same argument as in \v{C}erny \cite{cerny07}.

\smallskip
\noindent{\textsl{Case {\bf A1}:}} The estimate \eqref{eq:varasymptotic} is contained in Theorem~3 of Deligiannidis and Utev \cite{DU15}.  It remains to prove \eqref{eq:expasymptotic}.

Similar to \cite{cerny07}, we write 
\begin{align*}
\rE L_n (k)
&= \sum_{j_1, \dots, j_k=0}^n \rP [ S_{k_1} = \dots = S_{j_k}]
= \sum_{b=1}^k \rho(b,k) \sum_{0\leq j_1 < \dots < j_b \leq n}
\rP[S_{j_1} = \dots = S_{j_b}],
\end{align*}
where $\rho(k,k) = k!$, while the remaining factors will not be important.

Letting 
\begin{equation}
M_n(b):= \{ (m_0, \dots, m_b)\in \mathbb{N}^{b+1}: m_1, \dots, m_{b-1} \geq 1, \sum m_i =n\},
\label{eq:Mn}
\end{equation}
we have by the Markov property
\begin{equation}\label{eq:abn}
a_b(n):=\sum_{0\leq j_1 < \dots < j_b \leq n}
\rP[S_{j_1} = \dots = S_{j_b}] = \sum_{m \in M_n(b)} \prod_{i=1}^{b-1} \rP [S_{m_i}=0].
\end{equation}
Then for  $\lambda \in [0,1)$, by standard Fourier inversion
\begin{align*}
\sum_{n=0}^\infty a_b(n) \lambda ^n  
&= \sum_{n=0}\lambda^n \sum_{m \in M_n(b)} \prod_{i=1}^{b-1} \rP [S_{m_i}=0]\\
&= \sum_{m_0\geq 0}\sum_{m_1, \dots, m_{b-1}\geq 1}^\infty \sum_{n=0}^\infty 
\lambda^{m_0+\dots +m_{b-1}+n} \prod_{i=1}^{b-1} \rP [S_{m_i}=0]\\
&= \sum_{m_0=0}^\infty \lambda ^{m_0} \sum_{n=0}^\infty \lambda^n 
 \prod_{i=1}^{b-1} \sum_{m_i=1}^\infty \lambda^{m_i}\rP [S_{m_i}=0]\\
&= \frac{1}{(1-\lambda)^{2}} \bigg[\frac{1}{2\pi} \int_{-\pi}^\pi \frac{\lambda \phi(t) \rd t}{1-\lambda \phi(t)}\bigg]^{b-1}
\sim \frac{(\pi \gamma)^{1-b}}{(1-\lambda)^{2}} \log\Big( \frac{1}{1-\lambda}\Big)^{b-1},
\end{align*}
as $\lambda \uparrow 1$, by Lemma~\ref{lem:lambdaasymptotic}.
Then under {\bf A1} \eqref{eq:expasymptotic} follows by Karamata's Tauberian theorem, since the sequence 
$a_b(n)$ is monotone increasing. 

\smallskip
\noindent\textsl{Case {\bf A2'}:}
The estimate \eqref{eq:expasymptotic} follows from \eqref{eq:abn} and \eqref{eq:SRWgreen}.

{The proof of \eqref{eq:varasymptotic} can be adapted from \cite{DU15}.}
The variance is given by
\begin{align*}
\var(L_n(k))
&= C(k)\sum_{i_1 \leq \cdots \leq i_k} \sum_{l_1\leq \dots \leq l_k}
\Big\{ \rP \big[ S(i_1) = \cdots = S(i_k); S(l_1) = \cdots = S(l_k)\big]\\
&\qquad -
\rP \big[ S(i_1) = \cdots = S(i_k)\big] \rP\big[S(l_1) = \cdots = S(l_k)\big]
\end{align*}
The terms where $l_1, \dots, l_k$ is not completely contained in any of the intervals 
$[i_j, i_{j+1}]$ can be bounded above by the positive term in the sum using 
\eqref{eq:SRWlclt} and the approach in \cite{DU15}. A similar, albeit more involved, calculation is performed in the proof of Proposition~\ref{lem:lln}.

Suppose then that $l_1,\dots, l_k \in [i_j, i_{j+1}]$ for some $j$, and by symmetry we can take $j=1$. 
Define $M_n(2k)$ as in \eqref{eq:Mn} and change variables to 
\begin{gather*}
i_1=m_0,\,\,\, l_1= m_0 +m_1, \,\,\,l_2 = m_0+m_1+m_2, \dots,\,l_k = m_0+\cdots+m_{k}\\
i_2 = l_k + m_{k +1},\,\,\, \cdots, i_{k} = l_k + m_{k+1} + \cdots
+m_{2k -1}.
\end{gather*}
Write $p(m) = \rP[S(m)=0]$ and 
$\bar{p}(m) = 1/(\pi m)$. The contribution of these terms is then 
\begin{align*}
{J_n(k)}
&=C(k)
\sum_{M_n(2k)}\prod_{\substack{1\leq j \leq 2k -1\\j\neq 1, k+1}}
\!\!\!\! p(m_j)
\times \Big\{
p(m_1+m_{k +1}) 
 - p(m_1 + \cdots +m_{k+1})\Big\}.
\end{align*}
By \cite[Theorem~2.1.3]{LawlerLimic} we have that
$$|p(m)+p(m+1)-2\bar{p}(m)| \leq \frac{C}{m^2}.$$
Let $q:= m_2+\cdots +m_k$ and 
$$M := n-\sum_{\substack{0\leq j \leq 2k -1\\j\neq 1, k +1}}m_j.$$
Then 
\begin{align*}
\lefteqn{\Big|\sum_{m_1+m_{k+1}=0}^{M} p(m_1+m_{k+1}) - p(m_1+m_{k+1}+q) \Big|}\\
&\leq \sum_{m_1=0}^{M}
\sum_{m_{k+1}=0}^{[(M-m_1)/2]}\Big| p(m_1+2m_{k+1}) + p(m_1+2m_{k+1}+1) \\
&\qquad -p(m_1+2m_{k+1}+q)-p(m_1+2m_{k+1}+1 +q) \Big|\\
&\leq \sum_{m_1=0}^{M}\sum_{m_{k+1}=0}^{[(M-m_1)/2]}
\Big\{ 
|\bar{p}(m_1+2m_{k+1}) - \bar{p}(m_1+2m_{k+1}+q)| +
\frac{C}{(m_1+2m_{k+1})^2}\Big\}\\
&\leq \sum_{m_1=0}^{M}\sum_{m_{k+1}=0}^{[(M-m_1)/2]}
\Big\{ 
\frac{q}{(m_1+2m_{k+1})(m_1+2m_{k+1}+q)}
+\frac{C}{(m_1+2m_{k+1})^2}\Big\}\\
&\leq \sum_{m_1+m_{k+1}=0}^{n}
\Big\{ 
\frac{q}{(m_1+m_{k+1})(m_1+m_{k+1}+q)}
+\frac{C}{(m_1+m_{k+1})^2}\Big\}.
\end{align*}
Thus going back to $J_n(k)$ we have
\begin{align*}
{J_n(k)}
&\leq 
\sum_{M_n(2k)}\prod_{\substack{1\leq j \leq 2k -1\\k\neq 1, k+1}}
\!\!\!\! p(m_k)
\times \Big\{
\frac{m_2+\cdots+ m_k}{(m_1+\cdots+ m_{k +1})(m_1+m_{k+1}) }
+ \frac{C}{(m_1+m_{k +1})^2}  \Big\}\\
&=\sum_{M_n(2k)}\prod_{\substack{1\leq k \leq 2k -1\\k\neq 1, k+1}}
\!\!\!\! p(m_k)
\frac{m_2+\cdots+ m_k}{(m_1+\cdots+ m_{k +1})(m_1+m_{k+1}) } + O\big(n (\log n)^{2k -2} \big)\\
&=: J'_n(k)+ O\big(n (\log n)^{2k -2} \big).
\end{align*}
By symmetry after we split the sum in the numerator and we combine 
$m=m_1+m_{k +1}$ 
\begin{align*}
J'_n(k)
&\leq C k n \sum_{m_1, \dots, m_{2k-1}=0}^n
\frac{1}{m_3 \cdots m_{2k-1} (m_1+m_{k+1}) (m_1+\cdots+m_{k+1}) }\\
&\leq Cn (\log n)^{2k -4} \sum_{m, m_2=0}^n \frac{1}{m+m_2} \leq Cn^2 (\log n)^{2k -4}.\qedhere
\end{align*}
\end{proof}
\begin{remark}
A similar proof can be performed for any periodic random walk, by summing over the period.
\end{remark}
\subsubsection{Proof of Theorem~\ref{thm:distribution}}
\begin{proof}[{Proof of Theorem~\ref{thm:distribution}}]
The proof is very similar to \cite{cerny07} and the end of Theorem~\ref{thm:folner} and is thus ommitted. We just point out
that under {\bf A1}
\begin{equation} \frac{\log(n)}{\pi \gamma n}  \# R(n)\to 1, \qquad \text{a.s. as $n\to \infty$}, \label{Erdos Dvoretsky for Cauchy}\end{equation}
by a simple application of Result 2 in Le Gall and Rosen~\cite{lGallRosen91} with $\beta=d=1$ and $s(n)\equiv 1$, after one notices that in our case the truncated \textsl{Green's function} satisfies
\begin{equation}
\label{eq:green}
h(n) := \sum_{k=0}^n  \rP(S_k=0) \sim \frac{\log(n)}{\pi \gamma},
\end{equation}
by Lemma~\ref{eq:lambdaasymptotic} and Karamata's Tauberian theorem.

For {\bf A2'} note that \cite[Theorem~4]{Dvoretzky Erdos} states that
\begin{equation*}
\frac{\log n}{\pi n}\# R(n) \to 1.\qedhere
\end{equation*}
\end{proof}
\subsection{Proof of the F\"olner property of the Range (Theorem~\ref{thm:folner})}
Let $\alpha > 0$ and define
$$L_{n,w}(\alpha) 
:= \sum_{x \in \mathbb{Z}^d} l(n,x)^{\alpha} l(n,x+w)^{\alpha}.$$
These quantities are of interest since
\begin{align*}
L_{n,w}(0) 
&:=\lim_{\alpha \downarrow 0} L_{n,w}(\alpha)
= \sum_{x\in \mathbb{Z}^d} \II\Big(l(n,x)>0 \Big) \II\Big(l(n,x+w)>0\Big)\\
&= \# (R(n)\cap R(n)+w).
\end{align*}
Using the above notation the F\"olner property~\eqref{eq:folner} can be written as 
\[\lim_{n\to \infty} \frac{L_{n,w}(0)}{L_{n,0}(0)} = 1.\qedhere
\] 
We will use the following result.
\begin{proposition}
Assume {\bf A1} or {\bf A2} holds. For all $w\in \mathbb{Z}^d$ and $\alpha \in \mathbb{Z}$, $\alpha \geq 1$
$$\frac{L_{n,w}(\alpha) }{n (\log n)^{2\alpha -1}} \to 
\begin{cases}
\frac{\Gamma(2\alpha+1)}{(\pi \gamma)^{2\alpha -1}}, & \mbox{for $d=1$}\\
\frac{\Gamma(2\alpha+1)}{(2\pi \sqrt{|\Sigma|})^{2\alpha -1}}, & \mbox{for $d=2$.}
\end{cases}$$\label{lem:lln}
\end{proposition}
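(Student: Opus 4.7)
The plan is to follow the scheme of Proposition~\ref{propn:lln}: establish the mean asymptotics, bound the variance by $O(n^2(\log n)^{4\alpha-4})$, apply Chebyshev and Borel--Cantelli along a geometric subsequence, and then use the monotonicity of $L_{n,w}(\alpha)$ in $n$ together with the slow variation of $n(\log n)^{2\alpha-1}$ to fill in the gaps between subsequence values, exactly as in \cite{cerny07}.

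For the mean I would expand
$$\rE\bigl[L_{n,w}(\alpha)\bigr]=\sum_{\substack{i_1,\dots,i_\alpha\\ j_1,\dots,j_\alpha}}\rP\bigl[S_{i_1}=\cdots=S_{i_\alpha},\;S_{j_1}=\cdots=S_{j_\alpha}=S_{i_1}+w\bigr],$$
reorder the $2\alpha$ time indices into an increasing sequence $0\leq t_1<\cdots<t_{2\alpha}\leq n$ together with a two-color labeling indicating which group ($i$ or $j$) each $t_s$ belongs to, and apply the Markov property. Each consecutive increment contributes $\rP[S_{t_{s+1}-t_s}=\delta_s]$ with $\delta_s\in\{0,\pm w\}$ according to whether the labels of $t_s$ and $t_{s+1}$ agree. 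By \eqref{eq:potentialbound}, replacing $\rP[S_m=\pm w]$ by $\rP[S_m=0]$ costs $O(|w|/m^2)$ per factor, and on summing this produces an error of strictly lower order than $n(\log n)^{2\alpha-1}$. The dominant contribution comes from strict inequalities in the $t_s$ with combinatorial factor $(\alpha!)^2\binom{2\alpha}{\alpha}=(2\alpha)!$ (orderings inside each group, times interleavings of the two groups), which is precisely the factor appearing in the leading asymptotic of $\rE[L_n(2\alpha)]$ in Proposition~\ref{propn:lln}. Hence $\rE[L_{n,w}(\alpha)]\sim\rE[L_n(2\alpha)]\sim\Gamma(2\alpha+1)(\pi\gamma_d)^{-(2\alpha-1)}\,n(\log n)^{2\alpha-1}$, and the case $d=2$ under \textbf{A2} is treated identically after replacing $\pi\gamma$ with $2\pi\sqrt{|\Sigma|}$.

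For the variance I would adapt the proof of \eqref{eq:varasymptotic} and of \cite{DU15}: write $\var\bigl(L_{n,w}(\alpha)\bigr)$ as a sum over $4\alpha$ time indices (two $\alpha$-tuples per factor of $L_{n,w}(\alpha)$) minus the square of the mean, split according to whether the four groups interleave or form separated consecutive blocks, cancel the separated contributions against the square of the mean, and bound the interleaved contributions by the change-of-variables/telescoping estimates already used for $\var(L_n(k))$ in Proposition~\ref{propn:lln}; the displacements $\pm w$ are absorbed via \eqref{eq:potentialbound} as before. The resulting bound is $O(n^2(\log n)^{4\alpha-4})$, so Chebyshev gives
$$\rP\!\left[\,\bigl|L_{n,w}(\alpha)/(n(\log n)^{2\alpha-1})-\Gamma(2\alpha+1)(\pi\gamma_d)^{-(2\alpha-1)}\bigr|>\varepsilon\right]=O\bigl((\log n)^{-2}\bigr),$$
which is summable along $n_k=\lfloor\lambda^k\rfloor$; Borel--Cantelli plus the standard sandwich argument, leveraging monotonicity of $L_{n,w}(\alpha)$, concludes. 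The main obstacle is the combinatorial bookkeeping: compared with the one-point quantity $L_n(k)$, the two-point structure produces a richer family of interleaving patterns, and in both the mean and the variance one must check that the asymmetry introduced by $w$ affects only lower-order terms, uniformly over all labelings; the individual estimates, however, are all provided by \eqref{lclt_exact} and \eqref{eq:potentialbound}.
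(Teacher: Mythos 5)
Your proposal is correct and follows essentially the same route as the paper: expand $\rE[L_{n,w}(\alpha)]$ over ordered time indices with a two-colour labelling, use the Markov property and \eqref{eq:potentialbound} to telescope away the $w$-shifts so that the leading term agrees with that of $\rE[L_n(2\alpha)]$ with combinatorial factor $(\alpha!)^2\binom{2\alpha}{\alpha}=\Gamma(2\alpha+1)$, then bound the variance by $O(n^2(\log n)^{4\alpha-4})$ via the interlacement decomposition of \cite{DU15} and finish with Chebyshev along a geometric subsequence plus the standard monotonicity sandwich. This matches the paper's argument in both structure and all key estimates.
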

We first complete the proof of Theorem~\ref{thm:folner} and then we will prove the above Proposition.
\begin{proof}[Proof of Theorem~\ref{thm:folner}]
We first treat the cases {\bf A1} and {\bf A2}.

Let $Y_n$ be defined as in \eqref{eq:yn}. 
Setting $\gamma_d = 2\pi \sqrt{|\Sigma|}$ for $d=2$ and $\gamma_d = \pi \gamma$ for $d=1$, 
define 
$$Z_n:= \gamma_d^2 \frac{l(n,Y_n) l(n,Y_n+w)}{\log(n)^2}.$$
For integer $\alpha$, by Proposition~\ref{lem:lln}
\begin{align*}
\rE[ Z_n^\alpha | \mathcal{F} ]
&= \frac{\gamma_d^{2\alpha} }{\#R(n)} \sum_x \frac{l(n,x)^\alpha l(n,x+w)^\alpha }{\log(n)^{2\alpha}}\\
&= \frac{\gamma_d^{2\alpha -1} L_{n,w}(\alpha) }{n \log(n)^{2\alpha -1}} 
	\frac{\gamma_d n/\log(n)}{R(n)} \to \Gamma(2\alpha +1).
\end{align*}
These are the moments of $Y^2$, where $Y \sim \mathrm{Exp}(1)$. 
Since 
$$\limsup_{k\to\infty} \frac{\Gamma(1+2k)^{1/2k}}{2k}=\lim \frac{\Gamma(1+2k)^{1/2k}}{2k} = \re^{-1} <\infty,$$
these moments define a unique distribution on the positive real line (see \cite{Durr10}),
and therefore $\mathbb{P}$-almost surely, we have that conditionally on $\mathcal{F}$, 
$Z_n \to Y^2$ in distribution. 
Then
\begin{align*}
\frac{\sum_x \II(l(n,x)>0, l(n,x+w) >0)}{\# R(n)} 
&= \lim_{\alpha \downarrow 0} 
\frac{\gamma_d^{2\alpha} }{R(n)} \sum_x \frac{l(n,x)^\alpha l(n,x+w)^\alpha }{\log(n)^{2\alpha}}\\
&=\lim_{\alpha \downarrow 0} \rE [Z_n^\alpha|\mathcal{F}], \quad \text{and by monotone convergence} \\
&=\rE\Big[ \lim_{\alpha \to 0}Z_n^\alpha \Big|\mathcal{F}\Big]
= \rP(Z_n >0 |\mathcal{F}).
\end{align*}
This shows that 
\begin{equation*}
\frac{\sum_x \II(l(n,x)>0, l(n,x+w) >0)}{\# R(n)} 
=\rP(Z_n>0|\mathcal{F}) \xrightarrow[n\to\infty]{} \rP(Y^2 >0) = 1.
\end{equation*}
%
%

\noindent \textsl{Simple Random Walk.} For the simple random walk in $\mathbb{Z}^2$ notice that one can consider the 
\textsl{lazy} version of the random walk, where 
$\rP[\xi'=0]=1/2$ while for $e \in \mathbb{Z}^2$, with $|e|=1$ we have
$\rP[\xi'=e]=1/4d$. Then the lazy simple random walk $S'_n:= \sum_{i=1}^n \xi'$, 
is strongly aperiodic and satisfies {\bf A2'} and therefore letting 
$R'(n):= \{ S'(0), \dots, S'(n)\}$ be the range of $\{S'(n)\}_n$ we have for all $w \in \mathbb{Z}^2$
$$\frac{\# (R'(n) \cap R'(n)+w)}{\#R'(n)} \to 1,$$
almost surely. 
Define recursively the successive jump times
$$T_0:= \min\{j\geq 1: S_j'\neq S'_{j-1}\}, \quad T_k := \min\{j> T_{k-1}: S'_j \neq S'_{j-1} \}.$$
Notice that the range of the simple random walk $R(n)$ is equal to the range of the lazy walk at the time of the $n$-th jump, $R'(T_n)$.  
Therefore 
$$\frac{\# (R(n) \cap R(n)+w)}{\#R(n)} = \frac{\# (R'(T_n) \cap R'(T_n)+w)}{\#R'(T_n)} \to 1,$$
since $T_n \to \infty$ almost surely.
\end{proof}
\begin{remark}
Note that it is also possible to prove Theorem~\ref{thm:folner} under {\bf A2'} directly, by proving the corresponding version of Proposition~\ref{lem:lln} and then following the same argument as for {\bf A2}. To adapt the variance calculation in Proposition~\ref{lem:lln} to the simple random walk, one has to sum first over the period similarly to the proof of Proposition~\ref{propn:lln}.
\end{remark}

\begin{proof}[Proof of Proposition~\ref{lem:lln}]
First we prove the result for $\alpha \in \mathbb{N}$ and then we extend it to the general case $\alpha \geq 0$.
For $\alpha \in \mathbb{N}$, we have
\begin{align*}
L_{n,w}(\alpha) 
&= \sum_{x \in \mathbb{Z}^2} \Big(\sum_{i=0}^n \II(S_i=x) \Big)^\alpha
	\Big(\sum_{i=0}^n \II(S_i=x+w) \Big)^\alpha\\
&= \sum_{x \in \mathbb{Z}^2} \sum_{i_1,\cdots, i_\alpha=0}^n \II\Big[S({i_1})=\cdots=S({i_\alpha})=x\Big]
	\sum_{k_1,\cdots, k_\alpha=0}^n \II\Big[S({k_1})=\cdots=S({k_\alpha})=x+w\Big]\\
&=\sum_{i_1,\cdots, i_{2\alpha}=0}^n \II\Big\{S({i_1})=\cdots=S(i_\alpha)=S(i_{\alpha+1})-w=\cdots=S(i_{2\alpha})-w\Big\},
\end{align*}							
which for $w=0$ is corresponds to the term $L_n(2\alpha)$. 
Then we can rewrite $L_{n,w}(\alpha)$ as
\begin{equation}\label{eq:Lnwa}
\begin{split}
L_{n,w}(\alpha) 
&=\sum_{\beta=1}^{2\alpha}
\sum_{j=(\beta-\alpha)\vee 0}^{\alpha\wedge \beta}
\sum_{\pmb{\epsilon} \in E(\beta, j)}j!(\beta-j)!\\
&\quad 
\sum_{0\leq i_1< \cdots < i_{\beta}\leq n} \II\Big\{S({i_1})+\epsilon_1 w=\cdots=\cdots=S(i_{\beta})+\epsilon_\beta w\Big\},
\end{split}
\end{equation}		
where the third sum is over the set
$$E(\beta, j):= \{ \pmb{\epsilon}=(\epsilon_1, \cdots, \epsilon_\beta) \in \{-1,0\}^{\beta}
: \sum |\epsilon_i| =j \}.$$

\medskip
\noindent\textbf{Expectation of $L_{n,w}(\alpha)$.}
For given $\beta$, $n$ and $\pmb\epsilon \in E(\beta,j)$, we have using the Markov property
\begin{align*}
\alpha(\pmb{\epsilon}, \beta, n)
&:=\rE\sum_{0\leq i_1< \cdots < i_{\beta}\leq n} \II\Big\{S({i_1})+\epsilon_1 w=\cdots=\cdots=S(i_{\beta})+\epsilon_\beta w\Big\}\\
&= \sum_{m \in M_n(\beta)}\prod_{i=1}^{\beta-1} \rP[S({m_i})= (\epsilon_i-\epsilon_{i+1})w],
\end{align*}						
Next we show that the asymptotic behaviour does not actually depend on $w$ or $\pmb\epsilon$. 
In this direction we rewrite  
\begin{align*}
\alpha(\pmb{\epsilon}, \beta, n)
&= \sum_{m \in M_n}\prod_{i=1}^{\beta-1} \rP[S({m_i})= 0]
 + \sum_{m \in M_n} \Big\{
\prod_{i=1}^{\beta-1}\rP[S({m_i})= (\epsilon_i-\epsilon_{i+1})w] - \prod_{i=1}^{\beta-1}\rP[S({m_i})= 0]\Big\}\\
&=:\alpha(\pmb{0}, \beta, n) + \mathcal{E}(\pmb{\epsilon}, \beta, n, w),
\end{align*}						
and we claim that $\mathcal{E}(\beta, n, w) = o(\alpha(\pmb{0}, \beta, n))$ as $n\to \infty$.

Letting $\delta_i = \epsilon_i- \epsilon_{i+1}$ we telescope the product to get
\begin{align}
\lefteqn{\mathcal{E}(\pmb\epsilon, \beta, n, w)
= \sum_{m \in M_n} 
\prod_{i=1}^{\beta-1}\rP[S_{m_i}= \delta_i w] - \prod_{i=1}^{\beta-1}\rP[S_{m_i}= 0]}\label{eq:SRWexpectation}\\
&=\sum_{j=0}^{\beta-1}\sum_{m \in M_n} 
\prod_{i=1}^{\beta-1-j}\rP[S({m_i})= \delta_i w]\times \Big[\rP[S({m_{\beta-1-j+1}})= \delta_{\beta-1-j+1} w] - 
\rP[S({m_{\beta-1-j+1}})= 0]\Big]\notag\\
&\qquad \times\prod_{l=\beta-1-j+2}^{\beta-1} \rP[S({m_l})= 0],\notag
\end{align}
where implicitly the indices are not allowed to exceed their corresponding ranges.

We analyse the first term in detail
\begin{align*}
\lefteqn{\bigg|\sum_{m \in M_n} \prod_{i=1}^{\beta-2} \rP[S({m_i})=\delta_i w]\times \Big[ \rP[S({m_{\beta-1}})=\delta_{\beta-1} w] - \rP[S({m_{\beta-1}})=0] \Big]\bigg|}\\
&\leq \sum_{m_0, \dots, m_{\beta-2}=0}^n \prod_{i=1}^{\beta-2} \rP[S({m_i})=\delta_i w]\times \sum_{m_{\beta-1}=0}^n \bigg| \rP[S({m_{\beta-1}})=\delta_{\beta-1} w] - \rP[S({m_{\beta-1}})=0]\bigg|\\
&\leq  \sum_{m_0, \dots, m_{\beta-2}0}^n \prod_{i=1}^{\beta-2}\rP[S({m_i})=\delta_i w]  \Big[1+\sum_{m_{\beta-1}=1}^\infty \frac{C}{m^2}\Big] 
= O\big( n \log(n)^{\beta -2}\big),
\end{align*}
by Lemma~\ref{lem:llt}, the remaining errors being very similar.

The asymptotic behaviour of $\alpha(\pmb{0}, \beta,n)$ follows from \cite{cerny07} for $d=2$ and Lemma~\ref{lem:lambdaasymptotic} for $d=1$ and is given by
\begin{align*}
\alpha(\pmb{0}, \beta, n)
&\sim n \Big( \frac{\log n}{\gamma_d} \Big)^{\beta -1}, 
\end{align*}
where $\gamma_1:=\pi \gamma$ and $\gamma_2:= 2\pi \sqrt{\det(\Sigma)}$.
Going back to \eqref{eq:Lnwa} we see that the leading term is for $\beta=2\alpha$, and from the above discussion, we can replace 
all terms $\alpha(\pmb{\epsilon},2\alpha,n)$ by $\alpha(\pmb{0},2\alpha,n)$. 
Since $\# E(2\alpha, \alpha) (\alpha!)^2 = \Gamma(2\alpha+1)$  we conclude that
\begin{align*}
\rE L_{n,w}(\alpha)
&= \rE \sum_{x\in \mathbb{Z}^d} l(n,x)^\alpha l(n,x+w)^\alpha
\sim \Gamma(2\alpha +1) \, n \Big( \frac{\log n}{\gamma_d}\Big)^{2\alpha -1}. 
\end{align*}
\medskip
\noindent\textbf{Variance of $L_{n,w}(\alpha)$.}
To compute the variance we will follow the approach developed in \cite{DU15}. 
First notice that
\begin{align*}
\rE L_{n,w}(\alpha)^2
&= \rE  \sum_{i_1, \dots, i_{2\alpha}=0}^n 
\mathbf{I} \Big( S(i_1)=\cdots=S(i_{\alpha})=S(i_{\alpha+1})-w=\cdots =S(i_{2\alpha})-w\Big)\\
&\qquad \times \sum_{j_1, \dots, j_{2\alpha}=0}^n 
\mathbf{I} \Big( S(j_1)=\cdots=S(j_{\alpha})=S(j_{\alpha+1})-w=\cdots =S(j_{2\alpha})-w\Big)
\end{align*}
Let $A_m, A_m'$ be 0 or 1 according to whether there is a $w$ or not in the $m$-th increment. Then
\begin{align*}
\var\big(L_{n,w}(\alpha)\big)
&= \sum_{k_1, \dots, k_{2\alpha}} \sum_{l_1, \dots, l_{2\alpha}}
	\bigg\{\rP\Big[S(k_1) = S(k_2) + A_2 w = \cdots =S(k_{2\alpha})+A_{2\alpha}w;\\
	&\qquad\qquad\qquad \qquad	S(l_1) = S(l_2) + A_2' w = \cdots =S(l_{2\alpha})+A_{2\alpha}'w\Big]\\
&\qquad - \rP\Big[S(k_1) = S(k_2) + A_2 w = \cdots =S(k_{2\alpha})+A_{2\alpha}w\Big]\\
&\qquad\qquad\qquad \qquad\qquad \times
	\rP\Big[S(l_1) = S(l_2) + A_2' w = \cdots =S(l_{2\alpha})+A_{2\alpha}' w\Big]\bigg\}.
\end{align*}

As we shall see the presence of $w$ does not affect the asymptotic. 
The main role is played by the interlacement of the sequences $\mathbf{k}=(k_1, \dots, k_{2\alpha})$ and $\mathbf{l} = (l_1, \dots, l_{2\alpha})$. 
In order to define the interlacement index $v(\mathbf{k}, \mathbf{l})$, of two sequences $\mathbf{k} = (k_1, \dots, k_r)$ and 
$\mathbf{l} = (l_1, \dots, l_s)$, let 
$\mathbf{j}$ be the combined sequence of length $r+s$, where ties between elements of $\mathbf{k}$ and $\mathbf{l}$ are counted twice. 
We also define $\pmb{\epsilon} = (\epsilon_1, \dots, \epsilon_{r+s})$, where $\epsilon_i = 1$ if the $i$-th element of the combined sequence is from $\mathbf{k}$ and 0 if it is from $\mathbf{l}$; that is ${j}_i \in \mathbf{k}$ and 0 otherwise. Then we define the \emph{interlacement index}, 
\begin{equation}
v(\mathbf{k}, \mathbf{l}) = v(k_1, \dots, k_r; l_1, \dots, l_s) := \sum_{i=1}^{r+s-1} |\epsilon_{i+1}-\epsilon_i|,
\end{equation}
which counts the number of times $\mathbf{k}$ and $\mathbf{l}$ cross over.

When $v=1$ then the contribution is zero by the Markov property. 
The main contribution will be from $v=2$. 
Similar to \cite{DU15}, the contributions of terms with $v \geq 3$ can be bounded above by just considering 
the positive part, $\rE L_{n,w}(\alpha)^2$. 
Let us first treat this case leaving $v=2$ for later.

\medskip
\noindent\emph{Case $v\geq 3$. }
Letting $\rho(\alpha)$ denote combinatorial factors, the contribution to $\rE L_{n,w}(\alpha)^2$ from the terms with interlacement $v\geq 3$ is trivially bounded above by
\begin{align*}
I_n(w, \alpha)
&:=\rho(\alpha)\sum_{k_1, \dots, k_{2\alpha}} \sum_{l_1, \dots, l_{2\alpha}}
	\bigg\{\rP\Big[S(k_1) = S(k_2) + A_2 w = \cdots =S(k_{2\alpha})+A_{2\alpha}w;\\
	&\qquad\qquad\qquad \qquad	S(l_1) = S(l_2) + A_2' w = \cdots =S(l_{2\alpha})+A_{2\alpha}'w\Big]\\
&= \rho(\alpha) \sum_{k_1, \dots, k_{2\alpha}} \sum_{l_1, \dots, l_{2\alpha}}
		\sum_{x} \rP\Big[S(k_1) =\cdots =S(k_{2\alpha})+A_{2\alpha}w;\\
	&\qquad\qquad\qquad\qquad S({l_1}) = S({k_1})-x,	S(l_1) = S(l_2) + A_2' w = \cdots =S(l_{2\alpha})		
		+A_{2\alpha}'w	\Big],
\end{align*}
where $A_i, A_i' \in \mathbb{Z}$ and may vary from line to line. 
Let $(j_1, \dots, j_{4\alpha})$ denote the combined sequence, allowing for matches. 
Changing variables 
$$j_1= m_0,\,\, j_2= m_0+m_1, \dots,\,\, j_{4\alpha} = m_0+\cdots+m_{4\alpha -1},\,\, n=m_0 + \cdots + m_{4\alpha},$$
with $m_0, \dots, m_{4\alpha} \geq 0$, we get
\begin{align*}
I_n(w, \alpha)
&\leq \rho(\alpha)\sum_{m_0, \dots, m_{4\alpha-1} \geq 0 }
	\sum_{x}\rP\Big[S(m_1) = S(m_1+m_2) + A_2 w + \delta_2 x = \\
	&\qquad\qquad\qquad \qquad\qquad\qquad\cdots =S(m_1+ \cdots +m_{4\alpha}) + A_{4\alpha} w + \delta_{4\alpha}x \Big]
\end{align*}
where $\delta_i:=\epsilon_i-\epsilon_{i+1} \in \{-1, 0, +1\}$, and $\pmb{\epsilon}$ is defined as earlier.
A simple application of the Markov property results in 
\begin{align*}
I_n(w,\alpha)&\leq  \rho(\alpha) n\sum_{m_1, \dots, m_{4\alpha -1}\geq 0} \sum_x \prod_{k=1}^{4\alpha -1}
\rP[ S({m_k}) = (\delta_{k-1}-\delta_k) x + A_k w],
\end{align*}
where the factor $n$ resulted from the free index $m_0$. Notice that since $v$ is the number of interlacements, exactly $u:= 4\alpha -1 -v$ of the $\delta$'s are 0 and thus by \eqref{eq:green}
\begin{align}
I_n(w,\alpha)
&\leq C n \log(n)^{4\alpha -1 -v} \sum_{j_1, \dots, j_v} \sum_x \prod_{t=1}^v \rP[S({j_t}) = \delta'_t x + A_t w]\label{eq:SRWvar1}
\end{align}
where $\delta'_t \in \{-1, +1\}$. Letting 
$$D_{n,v}:= \sum_{j_1, \dots, j_v=0}^n 
	\sum_x \prod_{k=1}^v \rP[S({j_k}) = \delta_k' x + A_k w],$$
notice that
\begin{align*}
D_{n,v} 
&\leq D_{n,v-1}  \sum_{j_v} \sup_y \rP\big[S({j_v})=y\big]
\leq C D_{n,v-1}  \sum_{j_v=1}^n \frac{1}{j_v}
\leq C\log(n) D_{n,v-1}.
\end{align*}
Repeating we arrive at 
$D_{n,v} 
\leq C \log(n)^{v-3} D_{n,3}$, and therefore
$$I_n(w, \alpha) \leq C n \log(n)^{4\alpha -4} D_{n,3}.$$
To complete our study of the $v\geq 3$ case we now treat the term $D_{n,3}$. 
\begin{align*}
D_{n,3}
&\leq C \sum_{i\leq j \leq k} \sum_x \rP[S(i) = \delta'_i x+ A_i w]\times
\rP[S(j) = \delta'_j x+ A_j w]\times\rP[S(k) = \delta'_k x+ A_k w]\\
&\leq C \sum_{i\leq j \leq k} \Big(\sup_y \rP[S(j) = y ]\Big) 
\sup_y\rP[S(i+k)=y],
\end{align*}
where $\tilde{S}_k$ denotes an independent copy of $S_k$. 
By symmetry and Lemma~\ref{lem:llt}
\begin{align*}
D_{n,3}
&\leq C \sum_{0\leq i\leq j \leq k\leq n} \frac{1}{j} \frac{1}{i+k}
\leq C \sum_{m_1, m_2, m_3=0}^n \frac{1}{m_1+m_2} \frac{1}{2m_1+m_2+m_3}\\
&\leq C \sum_{m_1, m_2=0}^n \frac{1}{m_1+m_2} \log\Big( 1+ \frac{n}{m_1+m_2}\Big)
\leq C \sum_{j=0}^{2n} \log\Big( 1+ \frac{n}{j}\Big)\\
&\leq C \int_{x=1}^{2n} \log\Big( 1+ \frac{n}{x}\Big) \rd x
\leq n \int_{1/k}^n \log(1+y) \frac{\rd y}{y^2} \leq Cn.
\end{align*}
Therefore $D_{n,3} =O( n)$ and thus 
the total contribution of the terms with $v\geq 3$ 
is $O(n^{2}\log(n)^{4\alpha -4})$.

%
\smallskip
\noindent\emph{Case $v=2$.}
Letting $M_n(4\alpha)$ be defined as usual, we have for some $q$ that
$l_1, \dots l_{2\alpha} \in [k_q, k_{q+1}]$. Denoting by $J_n(w, \alpha)$ the contribution of a single term with $v=2$
\begin{align}
J_n(w,\alpha)
&=
\sum_{M_n(4\alpha)} 
\prod_{\substack{1\leq k\leq 4\alpha -1\\k\neq q, q+ 2\alpha}} \rP[S(m_k)= A_k w]
\notag\\
&\qquad \times 
\Big[ \rP\Big(S(m_q)+S(m_{q+2\alpha})=  K_1 w\Big) - \rP\Big(S(m_q)+\cdots +S(m_{q+2\alpha})= K_2 w\Big)\Big]
\label{eq:SRWvar2},
\end{align}
where $K_1, K_2$ are integers determined by $\mathbf{k}, \mathbf{l}$ and their interlacement.
By \eqref{lclt_exact} it follows that
\begin{align*}
J_n(w,\alpha)
&\leq Cn \log(n)^{2\alpha -2} \sum_{p_0, \dots, p_{2\alpha}}
	\frac{1}{p_2 \cdots p_{2\alpha}} \Big[ \frac{1}{p_0+p_1}- \frac{1}{p_0+p_1+ \cdots +p_{2\alpha}} \Big]\\
&=Cn \log(n)^{2\alpha -2} \sum_{p_0, \dots, p_{2\alpha}}
	\frac{p_2+ \cdots + p_{2\alpha}}{p_2 \cdots p_{2\alpha} (p_0+p_1) (p_0+p_1+ \cdots + p_{2\alpha})} \\
&\leq C\alpha n \log(n)^{2\alpha -2} \sum_{p_0, \dots, p_{2\alpha}}
\frac{1}{p_3 \cdots p_{2\alpha} (p_0+p_1)(p_0+p_1+p_2+ \cdots + p_{2\alpha})} \\
&\leq C\alpha n \log(n)^{2\alpha -2} \sum_{p_2, \dots, p_{2\alpha}}\sum_{j=0}^{2n}
\frac{1}{p_3 \cdots p_{2\alpha} (j+p_2+ \cdots + p_{2\alpha})} \\
&\leq C \alpha n \log(n)^{2\alpha -2} \log(n)^{2\alpha -3+1} \sum_{p_1, p_2=0}^n\frac{1}{p_1+p_2}
\leq C \alpha n^2 \log(n)^{4\alpha -4}.
\end{align*}
Thus the total contribution of terms with interlacement index $v =2$  is $O\Big( n^2 \log(n)^{4\alpha -4}\Big)$.

To complete the proof of Proposition~\ref{lem:lln}, we first use Chebyshev's inequality to prove convergence along subsequences $n=\lfloor \rho^k\rfloor$, for $0<\rho<1$. We can fill in the gaps following the standard trick, as in 
\cite{cerny07}.
\end{proof}

\section{Proof of Theorem \ref{thm: relative complexity of RWRS}} \label{sec: proof of main theorem}

Our proof follows closely the outline of the proof of \cite{Aaronson} and \cite{Katok Thouvenot}. The main difference in our approach is that we are using the a.s. F\"olner property of the range and that we substitute the role of the local times with Theorem \ref{thm:distribution}. In the following we assume that the entropy of $\cS$ is finite. The case of infinite entropy can be easily derived by the same method.

Fix a finite generator $\beta$ for $\cS$, the existence of which is
a consequence of Krieger's Finite Generator Theorem \cite{Krieger} for $d=1$ and \cite{Katsnelson Weiss, Danilenko Park} for $d=2$. Let $\alpha=\left\{ \left[x_{1}\right]:x\in\Omega\right\} $
be the partition of $\Omega$ according to the first coordinate. The
partition $\Upsilon:=\alpha\times\beta$ is a countable generating
partition of $\Omega\times Y$ for $T$. Thus by Aaronson's Generator Theorem (Theorem \ref{thm: Aaronson's RC}), what we need to show is that 
\[
\frac{\log n}{n}\log\mathcal{K}_{\mathcal{B}_{\Omega}\times \mathcal{Y}}\left(\Upsilon,n,\epsilon\right)\xrightarrow{m} \pi h\left(\cS\right)\cdot\begin{cases}
\gamma, & d=1,{\bf \ A1}\\
2\sqrt{\det\Sigma}, & d=2,\ {\bf A2}, {\bf A2'}.
\end{cases}
\]
For $a_0,a_1,..,a_n\in \Upsilon$ we write 
\[
\left[a_0,a_1,\cdots,a_n\right]:=\cap_{j=0}^n T^{-j} a_j
\]
and the $\bar{d}_n$ metric on $\bigvee_{j=0}^{n-1} T^{-j}\Upsilon$, 
\[
\bar{d}_n \left( \left[a_0,a_1,\cdots,a_{n-1}\right],\left[a'_0,a'_1,\cdots,a_{n-1}\right] \right):=\frac{\# \left\{0\leq j\leq n-1: a_j\neq a'_j \right\}}{n}.
\]

It is straightforward to check that for all $n\in\mathbb{N}$ and $(w,y)\in\Omega\times Y$, 
\[
\left(\bigvee_{j=0}^{n-1}T^{-j}\Upsilon\right)(w,y)=\left[w_{0}^{n-1}\right]\times\beta_{R_{n}(w)}(y).
\]
where $\beta_{R_{n}(w)}(y):=\left(\bigvee_{l\in R_{n}(w)}\cS_{l}^{-1}\beta\right)(y)$
and $R_{n}(w):=\left\{ \sum_{j=1}^{l}w_{j}:\ 1\leq l\leq n\right\} $
is the range of the random walk up to time $n$. For $n\in\NN$, define
$\Pi_{n}:\Omega\to2^{\bigvee_{j=0}^{n-1}T^{-j}P}$ by 
\begin{eqnarray*}
\Pi_{n}(w): & = & \left\{ a\in\left(\bigvee_{j=0}^{n-1}T^{-j}\Upsilon\right):\ m\left(a\left|\BB_{\Omega}\times Y\right.\right)(w)>0 \right\}. 
\end{eqnarray*}
These are the partition elements seen by $w$. The function 
\begin{eqnarray*}
\Phi_{n,\epsilon}(x): & = & \min\left\{ \#F:\ F\subset\Pi_{n}(x),\ m\left(\cup_{a\in F}a\left|\BB_{\Omega}\times Y\right.\right)>1-\epsilon\right\} ,
\end{eqnarray*}
is an upper bound for $\mathcal{K}_{\mathcal{B}_{\Omega}\times \mathcal{Y}}\left(\Upsilon,n,\epsilon\right)(x)$
since in the definition of $\Phi_{n,\epsilon}$ we are using all sequences
in $\Pi_{n}(x)$ on their own and not grouping them into balls.

To get a lower bound, introduce 
\[
\mathcal{Q}_{n,\epsilon}(x):=\max\left\{ \#\left\{ z\in\Pi_{n}(x):\ \bar{d}_{n}(a,z)\leq\epsilon\right\} :\ a\in\Pi_{n}(x)\right\} 
\]
to be the maximal cardinality of elements of $\Pi_{n}(x)$ at a $\bar{d}_{n}$
ball centred at some $a\in\Pi_{n}(x)$. It then follows that 
\[
\mathcal{K}_{\BB_{\Omega}\times Y}\left(P,n,\epsilon\right)(x)\geq\frac{\Phi_{n,\epsilon}(x)}{\mathcal{Q}_{n,\epsilon}(x)}.
\]

Therefore the proof is separated into two parts. Firstly we prove that
\begin{equation}
\frac{\log n}{n}\log\Phi_{n,\epsilon}\xrightarrow{m}\pi h\left(\cS\right)\cdot\begin{cases}
\gamma, & d=1,{\bf \ A1}\\
2\sqrt{\det\Sigma}, & d=2,\ {\bf A2}, {\bf A2'},
\end{cases}\label{eq: upper bound}
\end{equation}
and the second part consists of showing that 
\begin{equation}
\frac{\log n}{n}\mathcal{\log Q}_{n,\epsilon}(x)\xrightarrow{m}0.\label{eq:lower bound}
\end{equation}

We will deduce \eqref{eq: upper bound} from the following Shannon Mcmillan Breiman Theorem. 

\begin{lemma} 
For $\mathbb{P}$ almost every $w\in\Omega$, 
\[
-\frac{\log n}{n}\log\nu\left(\beta_{R_{n}(w)}(y)\right)\xrightarrow{m}\pi h(\cS)\cdot\begin{cases}
\gamma, & d=1,{\bf \ A1}\\
2\sqrt{\det\Sigma}, & d=2,\ {\bf A2}, {\bf A2'},\ \text{as}\ n\to\infty.
\end{cases} \]
\label{lem: SMB for the Range}
\end{lemma}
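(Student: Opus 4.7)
The plan is to factor the left-hand side into two pieces: an asymptotic constant coming from the size of the range $\#R_n(w)$ and an entropic term which, for each fixed "good" $w$, is handled by the Shannon--McMillan--Breiman (SMB) theorem for $\mathbb{Z}^d$-actions along F\o lner sequences. Concretely, I would write, for every $n$ large enough that $\#R_n(w)\ge 1$,
\[
-\frac{\log n}{n}\log\nu\bigl(\beta_{R_n(w)}(y)\bigr)
=\frac{\#R_n(w)\,\log n}{n}\cdot\biggl(-\frac{1}{\#R_n(w)}\log\nu\bigl(\beta_{R_n(w)}(y)\bigr)\biggr),
\]
and analyse the two factors separately.

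The first factor is deterministic once $w$ is fixed, and converges almost surely to $\gamma_d$, where $\gamma_1=\pi\gamma$ (under \textbf{A1}) and $\gamma_2=2\pi\sqrt{\det\Sigma}$ (under \textbf{A2}/\textbf{A2'}). This is the Erd\H{o}s--Dvoretzky type asymptotic for the range: under \textbf{A1} it is stated in the paper as \eqref{Erdos Dvoretsky for Cauchy}, under \textbf{A2'} it is \cite{Dvoretzky Erdos}, and under \textbf{A2} the analogous statement is classical and follows from Lemma~\ref{lem:llt} together with Karamata's Tauberian theorem exactly as for \textbf{A1}. For the second factor, Theorem~\ref{thm:folner} guarantees that for $\mathbb{P}$-almost every $w$ the sequence $(R_n(w))_n$ is a F\o lner sequence for the $\mathbb{Z}^d$-action $\cS$. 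Since $\beta$ is a finite generator for the ergodic action $\cS$ of finite entropy $h(\cS)=h(\cS,\beta)$, the $L^1$-version of the Shannon--McMillan--Breiman theorem for $\mathbb{Z}^d$-actions along F\o lner sequences (Kieffer, Ornstein--Weiss) gives
\[
-\frac{1}{\#R_n(w)}\log\nu\bigl(\beta_{R_n(w)}(y)\bigr)\xrightarrow[n\to\infty]{L^1(\nu)}h(\cS),
\]
and in particular in $\nu$-probability. Combining these with Slutsky's theorem (a deterministic factor times a factor convergent in probability) yields convergence in $\nu$-probability of the whole expression to $\gamma_d\,h(\cS)$, which is exactly the limit claimed; the assertion about $m$-convergence in probability then follows by Fubini.

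The only delicate point is that the F\o lner sequence produced by the random walk is not naturally tempered in the sense of Lindenstrauss, so one cannot invoke the pointwise Ornstein--Weiss SMB. This is precisely why the lemma is stated with $\xrightarrow{m}$ rather than almost surely: convergence in probability only requires the $L^1$ version of SMB, which is known to hold along every F\o lner sequence for $\mathbb{Z}^d$-actions with a finite generator of finite entropy. Once this is recognised, the argument is short, and the rest of the work (the Folner property of $R(n)$, and the asymptotics $\#R_n \sim \gamma_d n/\log n$) has already been prepared in Section~\ref{sec: range}. The case $h(\cS)=\infty$ is handled by monotonicity: applying the argument to arbitrary large-entropy finite sub-generators $\beta'\subset\beta$ forces the $\liminf$ in probability of the left-hand side to be arbitrarily large.
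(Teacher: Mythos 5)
Your proposal is correct and follows essentially the same route as the paper: almost-sure F\"olner property of the range (Theorem~\ref{thm:folner}), Kieffer's $L^1$/in-probability Shannon--McMillan--Breiman theorem for the second factor, the Dvoretzky--Erd\H{o}s type asymptotics $\#R_n(w)\sim \gamma_d\, n/\log n$ for the first factor, and Fubini to pass to $m$-convergence. The additional remarks on temperedness and the infinite-entropy case are sensible but not needed beyond what the paper already assumes.
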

\begin{proof}
Let $d\in{1,2}$. By Theorem \ref{thm:folner}, for $\PP$ almost every $w$, the range $\left\{ R_{n}(w)\right\} $
is a F\"{o}lner sequence for $\mathbb{Z}^d$. Whence by Kieffer's Shannon-McMillan-Breiman Theorem \cite{Kieffer}, for $\mathbb{P}$ a.e. $w$, 
\[
-\frac{1}{\# R_{n}(w)}\log\nu\left(\beta_{R_{n}(w)}(y)\right) \xrightarrow[n\to\infty]{\nu}h\left(\cS\right)
\]
and thus by Fubini, 
\[
-\frac{1}{\# R_{n}(w)}\log\nu\left(\beta_{R_{n}(w)}(y)\right)\xrightarrow[n\to\infty]{m}h\left(\cS\right).
\]
Notice that $h(\cS,\beta)=h(\cS)$ since $\beta$ is a generating partition. Since by \cite{Dvoretzky Erdos} and \eqref{Erdos Dvoretsky for Cauchy},
\[
\frac{\log n}{n}\# R_{n}(w)\xrightarrow[n\to\infty]{a.s.}\pi\begin{cases}
\gamma, & d=1,{\bf \ A1},\\
2\sqrt{\det\Sigma}, & d=2,\ {\bf A2}, {\bf A2'},
\end{cases}
\]
the conclusion of the lemma follows. \end{proof}
To keep the notations shorter, write 
\[
\mathtt{b} _d(n):= \frac{\pi n}{\log(n)}\begin{cases}
\gamma, & d=1,{\bf \ A1},\\
2\sqrt{\det\Sigma}, & d=2,\ {\bf A2}, {\bf A2'}.
\end{cases}
\]
\begin{proof}[Proof of \eqref{eq: upper bound}]
Let $\epsilon>0$  and for $n\in\mathbb{N}, x\in\Omega$ let
\[
H_{n,x,\epsilon}:=\left\{y\in Y:\ \nu\left(\beta_{R_n(x)} \right)(y)=e^{-\mathtt{b} _d(n)h(\cS)(1\pm\epsilon)} \right\}
\]
By Lemma \ref{lem: SMB for the Range}, there exists $N_\epsilon$ such that for all $n>N_\epsilon$, $\exists G_{n,\epsilon}\in\mathcal{B}_\Omega$ so that $\mathbb{P}\left( G_{n,\epsilon} \right)>1-\epsilon$ and for all $x\in G_{n,\epsilon}$,
\begin{equation}
\nu\left( H_{n,x,\epsilon}\right) >1-\frac{\epsilon}{2}. \label{eq: consequence of SMB}
\end{equation}
For $x\in G_{n,\epsilon}$, set $F_{n,x,\epsilon}:=\left\{\beta_{R_n(x)}(y):\ y\in H_{n,x,\epsilon} \right\}$. Since  
\[
\min\left\{\log\nu(a):a\in F_{n,x,\epsilon}\right\}>-\mathtt{b}_d(n)h(\cS)(1+\epsilon) 
\]
one has by a standard counting argument that for $x\in G_{n,\epsilon}$
\[
\log\Phi_{n,\epsilon} (x)\leq \log \# F_{n,x,\epsilon}\leq \mathtt{b}_d(n)h(\cS)(1+\epsilon)
\]

On the other hand, it follows from  \eqref{eq: consequence of SMB} that for small $\epsilon$  and $x\in G_{n,\epsilon}$, if $F\subset \Pi_n (x)$ with
$m \left( \left. \bigcup_{a\in F} a \right| \mathcal{B}_\Omega \times Y \right)(x)>1-\epsilon$ then  for large $n$
\[
\# F \geq \frac{1-3\epsilon/2}{\max\left\{\log\nu(a):a\in F_{n,x,\epsilon}\right\}}\geq \frac{e^{\mathtt{b}_d(n)h(\cS)(1-\epsilon)}}{2}
\]  
Thus for every $x\in G_{n,\epsilon}$ with $n$ large,
\[
\log\Phi_{n,\epsilon}(x)\geq \mathtt{b}_d (n)h(\cS)(1-\epsilon)+\log(1/2)\geq \mathtt{b}_d (n)h(\cS)(1-2\epsilon). 
\]
The conclusion follows since
\begin{equation*}
m\left( \left[ \log\Phi_{n,\epsilon}(x)= \mathtt{b}_d (n)h(\cS)(1\pm 2\epsilon) \right]\right) \geq \mathbb{P}\left( G_{n,\epsilon} \right)\xrightarrow[n\to\infty, \epsilon\to 0]{} 1. \qedhere
\end{equation*}
\end{proof} 

\subsection{Proof of Equation~\eqref{eq:lower bound}}
Let $\varepsilon>0$ and choose $\delta>0$ such that 
\[
2H(3\delta/2)+3\delta \log (\#\beta)<\varepsilon
\]
where for $0<p<1$,
\[
H(p)=-p\log_2 (p)-(1-p)\log_2 (1-p).
\]
is the entropy appearing in the Stirling approximation for the binomial coefficients.
It follows from Theorem \ref{thm:distribution} that there exists $\mathsf{c}>0$ and sets $A_{\delta,n}\in\BB_{\Omega}$ (for all large $n$)  such that for every $w\in A_{\delta,n}$, 
\[
\frac{\#\left\{ x\in R_{n}(w):\ l(n,x)(w)>\mathsf{c}\log(n)\right\} }{\#R_{n}(w)}>1-\delta,
\]
and  $\mathbb{P}\left(A_{\delta,n}\right)>1-\delta$. Since $\# R_n(w)\sim \mathtt{b}_d(n)$ almost surely we can assume further that for all $w\in A_{\delta,n}$, $\# R_n (w)\lesssim 2\mathtt{b}_d (n)$. 

Since $\Pi_n(w)\subset \left[ x_0^{n-1}\right] \times \beta_{R_n(w)}$, we can define a map $\mathbf{z}: \Pi_n(w)\to \beta_{R_n(w)}$ by 
\[
a=:\left[ x_0^{n-1}\right]\times \mathbf{z}(a)
\]
For $z\in \beta_{R_n(w)}$ and $j\in R_n(w)$, denote by $z_j$ the element of $\beta$ such that $z\subset \cS_{j}^{-1}\beta$. 

\begin{lemma}
For large $n\in\mathbb{N}$ and $w\in A_{\delta,n}$, if $a,a'\in\Pi_n(w)$ then
\[
\#\left\{ j\in R_n(w):\ \mathbf{z}(a)_j\neq \mathbf{z}\left(a'\right)_j\right\}\leq \mathtt{b}_d(n) \left(\frac{\bar{d}_n\left( a,a'\right)}{\hat{\mathsf{c}}}+2\delta\right), 
\]where $\hat{\mathsf{c}}:=\mathsf{c}\cdot \mathtt{b}_d(n)\log(n)/{n}$. 
\label{lem: first step to lower bound} 
\end{lemma}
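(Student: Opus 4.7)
The plan is to reduce the combinatorial bound to a weighted counting identity involving the local times, and then exploit the hypothesis $w\in A_{\delta,n}$ to control two contributions separately.

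First I would unpack the definitions. Any two atoms $a,a'\in\Pi_n(w)$ share the same $\alpha$-coordinate word (both are of the form $[w_0^{n-1}]\times z$ with $z\in\beta_{R_n(w)}$), so when one compares the $\Upsilon$-names $(a_0,\dots,a_{n-1})$ and $(a'_0,\dots,a'_{n-1})$ of points in $a$ and $a'$ respectively, the two names can only disagree through the $\beta$-coordinate. Since for $(w,y)\in a$ one has $T^j(w,y)=(\sigma^j w,\cS_{S_j(w)}y)$, the $\beta$-part of the name at time $j$ is precisely $\mathbf{z}(a)_{S_j(w)}$. Therefore
\[
n\,\bar{d}_n(a,a')=\sum_{j=0}^{n-1}\mathbf{1}\!\left\{\mathbf{z}(a)_{S_j(w)}\neq \mathbf{z}(a')_{S_j(w)}\right\}=\sum_{k\in R_n(w)}l(n,k)(w)\,\mathbf{1}\!\left\{\mathbf{z}(a)_k\neq \mathbf{z}(a')_k\right\},
\]
up to a harmless boundary adjustment of $O(1)$ that does not affect the asymptotic statement.

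Next I would split the range into the ``heavy'' points
\[
R_n^{*}(w):=\{k\in R_n(w):l(n,k)(w)>\mathsf{c}\log n\}
\]
and the ``light'' complement. By the defining property of $A_{\delta,n}$, the light part has cardinality at most $\delta\#R_n(w)$, and since $w\in A_{\delta,n}$ also satisfies $\#R_n(w)\leq 2\mathtt{b}_d(n)$, the light contribution to $\#\{j\in R_n(w):\mathbf{z}(a)_j\neq \mathbf{z}(a')_j\}$ is at most $2\delta\,\mathtt{b}_d(n)$.

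For the heavy part, each $k$ with $\mathbf{z}(a)_k\neq\mathbf{z}(a')_k$ contributes at least $\mathsf{c}\log n$ to the identity of Step~1, so the number of such $k$ is bounded by
\[
\frac{n\,\bar{d}_n(a,a')}{\mathsf{c}\log n}=\frac{\mathtt{b}_d(n)\,\bar{d}_n(a,a')}{\hat{\mathsf{c}}},
\]
using the definition $\hat{\mathsf{c}}=\mathsf{c}\,\mathtt{b}_d(n)\log n/n$. Summing the heavy and light bounds gives the stated inequality. No step is genuinely difficult once the identity of Step~1 is in place; the only point requiring care is the bookkeeping to verify that the $O(1)$ boundary terms and the $2\delta\mathtt{b}_d(n)$ slack absorb cleanly into the right-hand side for $n$ large enough.
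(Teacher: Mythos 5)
Your proof is correct and follows essentially the same route as the paper: the paper also splits $K_n(w)=\{j\in R_n(w):\mathbf{z}(a)_j\neq\mathbf{z}(a')_j\}$ into its intersection with the heavy set $F_n(w)=\{j: l(n,j)(w)\geq\mathsf{c}\log n\}$ and the complement, bounds $\#F_n^c\leq\delta\#R_n(w)\lesssim 2\delta\mathtt{b}_d(n)$ via the definition of $A_{\delta,n}$, and controls the heavy part by the weighted count $\frac{1}{\mathsf{c}\log n}\sum_{j\in F_n}l(n,j)\mathbf{1}_{K_n}\leq\frac{n}{\mathsf{c}\log n}\bar{d}_n(a,a')$, which is exactly your local-time identity restricted to one direction of the inequality. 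No further comment needed.
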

\begin{proof}
Define 
\[
K_n(w):=\left\{ j\in R_n(w):\ \mathbf{z}(a)_j\neq \mathbf{z}\left(a'\right)_j\right\}
\] and
\[
F_n(w):=\left\{j\in R_n(w): l(n,x)(w)\geq \mathsf{c}\log(n)\right\}.
\] Then $K_n\subset \left(K_n\cap F_n\right) \cup F_n ^{c}$ and therefore since $w\in A_{\delta,n}$,
\begin{align*}
\# K_n(w) &\leq \# \left(K_n\cap F_n\right)(w)+\#F_n ^{c}(w)\\
&\leq \# \left(K_n\cap F_n\right)(w)+\delta \# R_n(w)\\
&\lesssim \# \left(K_n\cap F_n\right)(w)+2\delta \mathtt{b}_d (n)
\end{align*} 
 Finally,
 \begin{align*}
 \# \left(K_n\cap F_n\right) &\leq \frac{1}{\mathsf{c}\log(n)}\sum_{j\in F_n(w)} l(n,j)\mathbf{1}_{K_n(w)} \\
 &\leq \frac{1}{\mathsf{c}\log(n)} \# \left\{ 0\leq i\leq n-1:\ \mathbf{z}(a)_{s_i(w)}\neq \mathbf{z}(a')_{s_i(w)}\right\} \\
 &= \frac{n}{\mathsf{c}\log(n)} \bar{d}_n \left( a,a' \right).  
 \end{align*}
 The conclusion follows.
\end{proof}
\begin{proof}[Proof of \eqref{eq:lower bound}]
First we show that for $n$ large enough so that $A_{\delta,n}$ is defined,
\[
\max_{w \in A_{\delta,n}} \log \mathcal{Q}_{n,\hat{\mathsf{c}} \delta}(w) \leq \varepsilon a_d(n) 
\label{eq: what matters}
\]
To see this first notice that by Lemma \ref{lem: first step to lower bound} for every $a\in\Pi_n(w)$,
\[
\left\{ a'\in\Pi_n(w): \bar{d}_n\left(a,a'\right)\leq\hat{\mathsf{c}}\delta \right\} \subset \left\{ \mathbf{z}\in \beta_{R_n(w)}:\ \#\left\{j\in R_n(w): \mathbf{z}(a)_j\neq \mathbf{z}_j\right\} \leq 3\delta \mathtt{b}_d(n) \right\}.  
\]
Thus for $w\in A_{\delta,n}$, using the Stirling approximation for the Binomial and $\# R_n(w)\lesssim 2\mathtt{b}_d(n)$,
\begin{align*}
 \log \mathcal{Q}_{n,\hat{\mathsf{c}} \delta}(w) &\leq \log  \left[ \binom{\#R_n(w)}{3\delta \mathtt{b}_d(n)} (\# \beta)^{3\delta \mathtt{b}_d(n)} \right] \\
 &\lesssim 3\mathtt{b}_d(n) \delta \log (\#\beta)+\log\binom{2\mathtt{b}_d(n)}{3\delta \mathtt{b}_d(n)}\\
 &\sim \mathtt{b}_d(n)\left[ 3\delta \log (\#\beta)+ 2H(3\delta /2)\right]\\
 &\leq \varepsilon \mathtt{b}_d(n). 
 \end{align*}
 This shows that for large $n$,
 \[
 \mathbb{P}\left( \log \mathcal{Q}_{n,\hat{\mathsf{c}}\delta}>2\varepsilon \mathtt{b}_d(n) \right) \leq \mathbb{P} \left(A_{\delta,n}^c \right)\leq \delta
 \]
and thus we have finished the proof of \eqref{eq:lower bound}. 
\end{proof}
As was mentioned before, Theorem \ref{thm: relative complexity of RWRS} follows from \eqref{eq: upper bound} and \eqref{eq:lower bound}. 

\appendix
\section{Proofs of auxiliary results}
\begin{proof}[Proof of Lemma~\ref{lem:llt}]
We only prove the second statement, the first being simpler.
For $d=1$ and any $\epsilon >0$ by strong aperiodicity for $|t|>\epsilon$ it is true that $|\phi(t)| < C(\epsilon)<0$. Therefore
\begin{align*}
\Big|\rP[S(m)=0] - \rP[S_m=w]\Big|
&\leq \int_{-\pi}^\pi |1-\re^{\ri t w}| |\phi(t)|^m \rd t\\
&\leq \int_{|t|<\epsilon} |1-\re^{\ri t w}| |\phi(t)|^m \rd t 
	+4\pi C(\epsilon)^m,
\end{align*}
where the second term decays exponentially. 
For the first term we have, since 
$\phi(t) = 1-\gamma|t| + o(|t|)$, for $\epsilon$ small enough and $|t|< \epsilon$, 
$$
|\phi(t)|\leq |1-\gamma|t| | + D(\epsilon)|t| \leq 1-\frac{\gamma}{2} |t|.
$$
Therefore
\begin{align*}
\int_{|t|<\epsilon} |1-\re^{\ri t w}| |\phi(t)|^m \rd t 
&\leq C\int_{|t|<\epsilon} |t| |w| \Big(1-\frac{\gamma}{2}|t|\Big)^m \rd t \\
&=C |w|\int_{t=0}^\epsilon t \Big(1-\frac{\gamma t}{2}\Big)^m \rd t \\
&\leq C|w|\int_{t=0}^\epsilon t \exp\big(-\frac{m\gamma t}{2}\big) \rd t 
\leq C\frac{|w|}{m^2}.
\end{align*}
We prove \eqref{lclt_exact} for $d=1$. By \eqref{eq:potentialbound} it suffices to consider $w=0$. 
For the moment fix a small $\epsilon >0$.
Then, by aperiodicity, for $|t|>\epsilon$, there exists 
$\rho(\epsilon) \in (0,1)$, such that $|\phi(t)| < \rho(\epsilon)$.
Thus
\begin{align*}
\rP[S(n)=0]
&= \frac{1}{2\pi} \int_{-\pi}^{\pi} \phi(t)^n \rd t
= \frac{1}{2\pi} \int_{-\epsilon}^{\epsilon} \phi(t)^n \rd t + O( \rho(\epsilon)^n)\\
&= \frac{1}{2\pi} \int_{-\epsilon}^{\epsilon} [1-\gamma|t| +R(t)]^n \rd t + O( \rho(\epsilon)^n)\\
&=: I(n,\epsilon) + O( \rho(\epsilon)^n).
\end{align*}
Since $R(t) = o(t)$, for $|t|<\epsilon$ we can find $C(\epsilon)$ such that
$|R(t)|\leq C(\epsilon)|t|$ and such that $C(\epsilon) \to 0$ as $\epsilon \to 0$. 

Therefore letting $\gamma_1(\epsilon):= \gamma(1+C(\epsilon))$
\begin{align*}
I(n,\epsilon)
&\geq \frac{1}{2\pi} \int_{-\epsilon}^{\epsilon} [1-\gamma|t| - C(\epsilon) |t|]^n \rd t
= \frac{1}{\pi} \int_{0}^{\epsilon} [1-\gamma_1(\epsilon)t]^n \rd t\\
&=\frac{1}{\pi \gamma_1(\epsilon)} \int_{0}^{\gamma_1(\epsilon) \epsilon} [1-t]^n \rd t
=\frac{1}{\pi \gamma_1(\epsilon)} 
\Big\{ \frac{1}{n+1} - \frac{\big[1-\gamma_1(\epsilon)\epsilon\big]^{n+1}}{n+1}\Big\}.
\end{align*}
Since for $\epsilon >0$ small enough we have $0< 1-\gamma_1(\epsilon) \epsilon <1$
we compute 
$$
\liminf_{n\to \infty}
n \rP[S(n)=0] \geq \frac{1}{\pi \gamma_1(\epsilon)}.
$$
On the other hand we also have
\begin{align*}
I(n,\epsilon)
&\leq \frac{1}{2\pi} \int_{-\epsilon}^{\epsilon} [1-\gamma|t| + C(\epsilon) |t|]^n \rd t
= \frac{1}{\pi} \int_{0}^{\epsilon} [1-\gamma_2(\epsilon) t]^n \rd t
\end{align*}
where 
$\gamma_2(\epsilon) = 1-C(\epsilon)$.
Thus
\begin{align*}
I(n,\epsilon)
&\leq \frac{1}{\pi} \int_{0}^{\epsilon} [1-\gamma_2(\epsilon) t]^n \rd t
= \frac{1}{\pi\gamma_2(\epsilon)} \int_{0}^{\gamma_2(\epsilon)\epsilon} 
[1-t]^n \rd t\\
&= \frac{1}{\pi\gamma_2(\epsilon)} 
\Big\{ \frac{1}{n+1} - \frac{\big[1-\gamma_2(\epsilon)\epsilon\big]^{n+1}}{n+1}\Big\}.
\end{align*}
For $\epsilon>0$ small enough we have that 
$1-\gamma_2(\epsilon)\epsilon \in (0,1)$, and therefore we obtain that
$$\limsup_{n\to \infty} n \rP[S(n) = 0]
\leq \frac{1}{\pi \gamma_2(\epsilon)}.$$
Since $\epsilon>0$ is can be arbitrarily small and 
$\gamma_1(\epsilon), \lim \gamma_2(\epsilon) \to \gamma$, 
\eqref{lclt_exact} follows.

For $d=2$ the proof is similar, using polar coordinates.
\end{proof}
\medskip
\begin{proof}[Proof of Lemma~\ref{lem:lambdaasymptotic}]
Let $\delta >0$ be arbitrary but small. Then
\begin{align*}
\frac{1}{2\pi}\int_{t=-\pi}^\pi \frac{\lambda \phi (t) \rd t}{1-\lambda \phi(t)}
&= \frac{1}{2\pi}\int_{|t|\leq \delta} \frac{\lambda \phi (t) \rd t}{1-\lambda \phi(t)}+\frac{1}{2\pi}\int_{\pi\geq|t|> \delta} \frac{\lambda \phi (t) \rd t}{1-\lambda \phi(t)}.
\end{align*}
By strong aperiodicity for small enough $\delta>0$ there exists a small positive constant $D(\delta)$ such that 
$|\phi(t)|< 1- D(\delta)$ when $|t|>\delta$. Thus
$$\Big|\frac{1}{2\pi}\int_{\pi\geq |t|> \delta} \frac{\lambda \phi (t) \rd t}{1-\lambda \phi(t)}\Big| \leq C D(\delta)^{-1},$$
for all $\lambda \leq 1$. 
Also 
\begin{align*}
\frac{1}{2\pi}\int_{|t|\leq \delta} \frac{\lambda \phi (t) \rd t}{1-\lambda \phi(t)}
&= \frac{1}{2\pi}\int_{|t|\leq \delta} \frac{\lambda \phi (t) \rd t}{1-\lambda (1-\gamma|t|)} + 
I(\lambda, \delta),
\end{align*}
where a standard argument using \textbf{A1} and the strong aperiodicity shows that
there exists $r(\delta)=o_{\delta}(1)$, as $\delta \to 0$ such that 
$$|I(\lambda, \delta)| \leq r(\delta) \log \Big( \frac{1}{1-\lambda} \Big).$$

Finally as $\lambda \uparrow 1$ it is easily seen that
\begin{align*}
\frac{1}{2\pi}\int_{|t|\leq \delta} \frac{\lambda \phi (t) \rd t}{1-\lambda (1-\gamma|t|)}
&\sim \frac{1}{\pi}\int_{t=0}^\delta \frac{\rd t}{1-\lambda  +\lambda\gamma t }\\
&= \frac{1}{\pi}\int_{t=0}^\delta \frac{\rd t}{1-\lambda  +\lambda\gamma t }
\sim \frac{1}{\pi\gamma} \log\Big(\frac{1}{1-\lambda}\Big).
\end{align*}
Therefore as $\lambda \uparrow 1$
\begin{align*}
\frac{1}{2\pi}\int_{t=-\pi}^\pi \frac{\lambda \phi (t) \rd t}{1-\lambda \phi(t)}
&=\frac{1}{\pi\gamma} \log\Big(\frac{1}{1-\lambda}\Big) (1+ O(r(\delta))) + O (1)\\
&\sim \frac{1}{\pi\gamma} \log\Big(\frac{1}{1-\lambda}\Big),
\end{align*}
since $\delta$ is arbitrarily small and $r(\delta) \to 0$ as $\delta \to 0$.

\end{proof}

\setlength{\bibsep}{0.3pt}
\bibliographystyle{plainnat}

\end{document}